\renewcommand{\H}{\mathbb{H}}
\newcommand{\R}{\mathbb{R}}
\newcommand{\Rep}{\mathcal{R}}
\renewcommand{\S}{\Sigma}
\newcommand{\G}{G}
\newcommand{\K}{K}
\newcommand{\SO}{\mathrm{SO}}
\newcommand{\X}{\mathfrak{X}}
\newcommand{\q}{\textbf{q}}
\renewcommand{\P}{\mathbb{P}}
\newcommand{\T}{\mathcal{T}}
\newcommand{\sslash}{\mathbin{/\mkern-6mu/}}
\theoremstyle{definition}
\newtheorem{defi}{Definition}[section]
\theoremstyle{plain}
\newtheorem{theo}{Theorem}[section]
\newtheorem*{Theo}{Main Theorem}
\newtheorem{prop}[defi]{Proposition}
\newtheorem{cor}[defi]{Corollary}
\newtheorem{conj}[defi]{Conjecture}
\theoremstyle{remark}
\newtheorem{rem}[defi]{Remark}
\newtheorem{ex}[defi]{Example}
\title[Maximal representations in rank 2]{Higgs bundles, pseudo-hyperbolic geometry and maximal representations}
\author{J\'{e}r\'{e}my Toulisse}
\address{
Department of Mathematics \\
University of Southern California \\
3620 S. Vermont Avenue, KAP 104 \\
Los Angeles, CA 90089-2532}
\email{toulisse@usc.edu}
\date{\today}
\begin{document}
\maketitle

\begin{abstract}
These notes are an extended version of a talk given by the author in the seminar \textit{Th\'eorie Spectrale et G\'eom\'etrie} at the \textit{Institut Fourier} in November 2016. We present here some aspects of a work in collaboration with B. Collier and N. Tholozan \cite{CTT}. We describe how Higgs bundle theory and pseudo-hyperbolic geometry interfere in the study of maximal representations into Hermitian Lie groups of rank $2$.
\end{abstract}

\tableofcontents

\section*{Introduction}

In this paper, $\S$ will denote a closed oriented surface of genus $g>1$ and $\Gamma$ its fundamental group. 

The Teichm\"uller space $\T(\S)$ is the space of isotopy classes of complex structures on $\S$. By the Uniformization Theorem, $\T(\S)$ is identified with the space of hyperbolic structures on $\S$ (modulo isotopy). Each hyperbolic surface being isometric to the quotient of the hyperbolic disk by a discrete subgroup of the isometry group $\text{Isom}^+(\H^2)$, $\T(\S)$ is identified with the space of conjugacy classes of discrete and faithful representations $\rho: \Gamma \to \text{Isom}^+(\H^2)$ (also called \textit{Fuchsian representations}).

By a result of Teichm\"uller, $\T(\S)$ is diffeomorphic to $\R^{6g-6}$. Because $\T(\S)$ is the deformation space of complex structures on $\S$, it is naturally a complex manifold. The corresponding complex structure is compatible with the Goldman symplectic form, turning $\T(\S)$ into a K\"ahler manifold (the corresponding K\"ahler metric is the Weil-Petersson metric).

The concept of maximal representation of $\Gamma$ into a Hermitian Lie group $G$ provides a  natural generalization of Fuchsian representations in higher rank Lie groups. These representations share many nice geometrical and dynamical properties with Fuchsian representations. However, the intricate geometry of the corresponding symmetric spaces make the study of these representations difficult. Note also that the set $\Rep^{max}(\Gamma,G)$ of conjugacy classes of maximal representations is generally not connected and carries no ``natural'' complex structure.

In these notes, we are interested in maximal representations $\rho : \Gamma \to G$, where $G$ is a Hermitian Lie group of rank $2$. By rigidity results of S. Bradlow, O. Garc\'ia-Prada and P. Gothen \cite{MaxRepsHermSymmSpace} (see also M. Burger, A. Iozzi and A. Wienhard \cite{burgeriozziwienhard} for the general case), it is sufficient to understand maximal representations into $G:=\SO_0(2,n+1)$ for $n>1$.

Our main idea is to consider the action of maximal representations on the pseudo-hyperbolic space $\H^{2,n}$. This is a pseudo-Riemannian symmetric space of signature $(2,n)$ and constant curvature $-1$. Because of this curvature property, its geometry is somehow easier to understand than that of the Riemannian symmetric space.

Recall that an embedding into a pseudo-Riemannian manifold is \textit{space-like} if the induced metric is Riemannian and is \textit{maximal} if any local deformation decreases the area.

The main result is the following:

\begin{Theo}
For any maximal representation $\rho: \Gamma \to G$, there exists a unique $\rho$-equivariant maximal space-like embedding $u: \widetilde\S \to \H^{2,n}$.
\end{Theo}

The existence part uses the theory of Higgs bundles. More precisely, we show that the Higgs bundle associated to a $\rho$-equivariant minimal surface into the Riemannian symmetric space is \textit{cyclic}. This property gives a reduction of the associated flat $\SO_0(2,n+1)$ bundle to a $\SO(2)\times \text{S}(\text{O}(1)\times \text{O}(n))$-bundle. From this reduction, one can extract a $\rho$-equivariant map $u: \widetilde\S\to \H^{2,n}$ that we show is a maximal space-like embedding.

The uniqueness is proved using pseudo-hyperbolic geometry. Supposing the existence of two different $\rho$-equivariant maximal space-like embeddings, one can define a \textit{causal distance} as the distance given by time-like geodesics. By equivariance, one can find a time-like geodesic segment between the two surfaces whose causal length is maximal. One can thus use the negative curvature of $\H^{2,n}$ to get a contradiction. This contradiction is obtained using a maximum principle, and generalizes a result of F. Bonsante and J.-M. Schlenker \cite{BonsanteSchlenker}.

Finally, the Gauss map of the $\rho$-equivariant maximal surface $u: \widetilde\S \to \H^{2,n}$ gives a $\rho$-equivariant minimal surface in the Riemannian symmetric space.

This result has important consequences:

\begin{enumerate}
	\item It gives an explicit interpretation of all the connected components of $\Rep^{max}(\Gamma,G)$.
	\item It proves (a generalization of) a conjecture of Labourie for maximal representations in rank $2$.
	\item It provides a natural complex structure on $\Rep^{max}(\Gamma,G)$.
	\item It gives a way to construct geometric structures associated to maximal representations (this aspect will not be treated here).
\end{enumerate}

\begin{rem}\label{r:AdScase} The case $G=\SO_0(2,2)$ is very particular. In fact, up to finite index, $\SO_0(2,2)\cong \SO_0(2,1)\times \SO_0(2,1)$ and we get an identification $\Rep^{max}(\Gamma,G)=\T(\S)\times \T(\S)$. From the work of G. Mess \cite{Mess}, maximal representations in $\SO_0(2,2)$ are holonomies of Globally Hyperbolic $\H^{2,1}$-structures on $\S\times \R$ (also called \textit{anti-de Sitter}). In this setting, our Main Theorem was proved by T. Barbot, F. B\'eguin et A. Zeghib \cite{BBZ}. The corresponding equivariant minimal surface in the Riemannian symmetric space $\H^2\times\H^2$ is the graph of a minimal Lagrangian diffeomorphism  between the corresponding hyperbolic surfaces (whose existence is due to R. Schoen \cite{schoen}). The link between the two objects was made by K. Krasnov and J.-M. Schlenker in \cite{krasnovschlenker}.
\end{rem}

\noindent\textbf{Organization of the paper:} The first section describes maximal representations into a Hermitian Lie group. The second section introduces Higgs bundles for $G=\SO_0(2,n+1)$ and explain the non-Abelian Hodge correspondence. The third section is devoted to the proof of the Main Theorem, while the fourth one describes all the consequences.

\section{Maximal representations}

\subsection{A main example: the Teichm\"uller space}

Denote by $\R^{2,1}$ the usual $3$-dimensional space $\R^3$ endowed with the signature $(2,1)$ quadratic form $\q$ defined by $\q(x)=x_1^2+x_2^2-x_3^2$.

 The Lie group $\SO_0(2,1)$, which is the connected component of the identity of the group of orthogonal transformations of $\R^{2,1}$, acts by isometries on the hyperbolic disk $\H^2$. Recall that
\[ \H^2= \P \big(\{x\in \R^{2,1},~\q(x)<0\} \big)\subset \R\P^2, \]
where $\P: \R^3\setminus\{0\} \to \R\P^2$ is the natural projection.

The corresponding character variety 
\[\Rep(\Gamma,\SO_0(2,1)):=\text{Hom}(\Gamma,\SO_0(2,1))\sslash\SO_0(2,1)\]
is the quotient (in the algebraic sense) of the set of morphisms from $\Gamma$ to $\SO_0(2,1)$ by the action of $\SO_0(2,1)$ by conjugation. It is well-known that any conjugacy class $[\rho]\in \Rep(\Gamma,\SO_0(2,1))$ contains a reductive representative.

Given a morphism $\rho: \Gamma \to \SO_0(2,1)$, there always exists a smooth map $f: \widetilde{\S} \to \H^2$ which is $\rho$-equivariant, that is satisfies:
\[ \forall x\in \widetilde \S, \forall \gamma\in \Gamma,~f(\gamma.x)=\rho(\gamma)f(x). \]
Here, $\Gamma$ acts on $\widetilde{\Sigma}$ by deck transformations.

Given such a $\rho$-equivariant map $f: \widetilde\S \to \H^2$, the pull-back $f^*\textbf{vol}$ of the volume form on $\H^2$ by $f$ is invariant under the action of $\Gamma$. In particular, $f^*\textbf{vol}$ descends to a closed $2$-form on $\Sigma$, and one can define the \textit{Euler class} of $\rho$ by
\[ e(\rho):= \frac{1}{2\pi}\int_\S f^*\textbf{vol}.\]

The Euler class is an integer, independent of the $\rho$-equivariant map $f$ and on the conjugacy class of $\rho$. In particular, the Euler class defines a continuous map
\[ e:~ \Rep(\Gamma,\SO_0(2,1)) \longrightarrow \mathbb{Z}.\]
The Euler class satisfies the Milnor-Wood inequality \cite{milnor}:
\[\forall[\rho]\in\Rep(\Gamma,\SO_0(2,1)),~\vert e([\rho])\vert \leq 2g-2.\]

In his thesis, W. Goldman proved the following:
\begin{theo}[W. Goldman]
The set $e^{-1}(2g-2)\subset \Rep(\Gamma,\SO_0(2,1)$ is canonically identified with the Teichm\"uller space $\T(\S)$ of $\S$.
\end{theo}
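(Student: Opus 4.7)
The plan is to prove the two inclusions $\T(\S)\subseteq e^{-1}(2g-2)$ and $e^{-1}(2g-2)\subseteq \T(\S)$ separately, the first by a direct computation with the developing map and the second as a rigidity statement (the equality case of the Milnor--Wood bound). For the forward inclusion, given a Fuchsian representation $[\rho]\in\T(\S)$ the developing map $f:\widetilde{\S}\to\H^2$ of the associated hyperbolic structure is a $\rho$-equivariant diffeomorphism, so $f^*\textbf{vol}$ descends to the hyperbolic area form on $\S$ and Gauss--Bonnet for a closed surface of constant curvature $-1$ yields $e(\rho)=\tfrac{1}{2\pi}\int_\S f^*\textbf{vol}=2g-2$.

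The reverse inclusion, showing that any $\rho$ with $e(\rho)=2g-2$ is Fuchsian, I would approach via harmonic maps. After replacing $\rho$ by a reductive representative in its conjugacy class, I would fix a complex structure $J$ on $\S$ and, by Eells--Sampson--Corlette, take a $\rho$-equivariant harmonic map $u:(\widetilde{\S},J)\to \H^2$. The idea is to trap the energy $E(u)$ between two matching bounds. On one side, the pointwise Jacobian estimate $|u^*\textbf{vol}|_g=|\mathrm{Jac}(u)|\leq \tfrac{1}{2}|du|_g^2=e(u)$ (with equality iff $du$ is conformal at the point) integrates to
\[ 2\pi(2g-2)=\int_\S u^*\textbf{vol}\leq E(u). \]
On the other side, choosing $g$ to be the hyperbolic metric in the conformal class of $J$, the Bochner--Sampson identity for harmonic surfaces with target of curvature $-1$, combined with a maximum principle on the closed surface $\S$, should give the uniform bound $e(u)\leq 1$, hence $E(u)\leq \mathrm{Area}(\S,g)=2\pi(2g-2)$.

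The two inequalities must then saturate everywhere: $du$ is conformal at every point and $u^*g_{\H^2}=g$ pointwise, so $u$ is a $\rho$-equivariant local isometry between the simply connected complete surfaces $(\widetilde{\S},g)$ and $\H^2$ of constant curvature $-1$, hence a global isometry. This realizes $\rho$ as the holonomy of the hyperbolic structure $g$ on $\S$, putting $[\rho]\in\T(\S)$; canonicity of the identification then follows from the fact that a hyperbolic structure is determined up to isotopy by its holonomy. The main obstacle in this plan is the Schwarz-type pointwise bound $e(u)\leq 1$: for holomorphic $u$ this is Ahlfors's classical Schwarz lemma, but for a merely harmonic map one has to extract it from the strict negativity of the target curvature via a delicate Bochner computation and the maximum principle on $\S$.
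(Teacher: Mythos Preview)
The paper does not prove this theorem; it is stated as Goldman's result and used as motivation, with no argument given. So there is nothing to compare against, and I will evaluate your proposal on its own.

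Your forward inclusion is fine: the developing map of a hyperbolic structure is a $\rho$-equivariant diffeomorphism onto $\H^2$, and Gauss--Bonnet gives $e(\rho)=2g-2$.

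The reverse inclusion, however, breaks at exactly the step you flag as the obstacle, and it is not merely delicate---it is false as stated. The pointwise bound $e(u)\leq 1$ does \emph{not} hold for harmonic maps between hyperbolic surfaces. Here is a concrete counterexample: take two non-isometric hyperbolic metrics $g_1,g_2$ on $\S$ and let $u:(\S,g_1)\to(\S,g_2)$ be the unique harmonic diffeomorphism isotopic to the identity (this is Eells--Sampson plus Hartman, or Schoen--Yau). Since $u$ is not an isometry, its energy strictly exceeds the area, $E(u)>2\pi(2g-2)=\mathrm{Area}(\S,g_1)$, so $e(u)>1$ somewhere. The Ahlfors--Schwarz lemma genuinely needs holomorphicity; the Bochner formula for a harmonic map between surfaces of curvature $-1$ reads, at a maximum of $e$, roughly $0\geq |\nabla du|^2 - e + 2J^2$ with $J$ the Jacobian, and since $J^2\leq e^2$ this gives no bound of the form $e\leq 1$.

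If you want a harmonic-map proof, the correct route is to work with the holomorphic and antiholomorphic energy densities $H=|\partial u|^2$ and $L=|\bar\partial u|^2$ separately. One has $\mathrm{Jac}(u)=H-L$ and, where $H>0$, the Bochner identity $\Delta\log H = 2(H-L)-2$; integrating and using $\int(H-L)=2\pi(2g-2)=\mathrm{Area}(\S)$ forces the inequalities to saturate and $L\equiv 0$, i.e.\ $u$ is holomorphic, after which Ahlfors--Schwarz does apply. Alternatively, Goldman's original argument is topological (via the flat circle bundle and its Euler class), and there are also bounded-cohomology proofs; any of these would close the gap.
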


In particular, representations $\rho: \Gamma\to\SO_0(2,1)$ maximizing the Euler class are holonomies of hyperbolic metrics on $\S$ (the so-called \textit{Fuchsian representations}). Note also that, by the uniformization theorem, the Teichm\"uller space $\T(\S)$ is also the deformation space of complex structures on $\S$ and as such, is naturally a complex manifold (which is not granted from the representation point of view).

\subsection{Generalizing the Teichm\"uller space}

A natural way to generalize the Teichm\"uller space of $\S$ is to replace the group $\SO_0(2,1)$ by another semi-simple Lie group $G$. In that case, the character variety $\Rep(\Gamma,G)$ is defined in an analogous way, and one wants to decide which representations will play the role of the Fuchsian ones.

If $G$ is of non-compact type and $\K \subset G$ is a maximal compact subgroup, we denote by $\X:= G/\K$ the associated symmetric space. Recall that the Killing form of the Lie algebra $\frak{g}$ of $G$ naturally defines a Riemannian metric on $\X$. With this metric, $\X$ is non-positively curved and the \textit{rank of $G$}, denoted $\text{rk}(G)$, is defined to be the largest $k\in \mathbb{N}$ such that the Euclidean space $\R^k$ embeds isometrically in $\X$.

\begin{defi}
The Lie group $\G$ is of \textit{Hermitian type} if the associated symmetric space $\X_\G$ is a K\"ahler manifold. If $\G$ is of Hermitian type, we denote by $\omega$ the corresponding symplectic form on $\X$.
\end{defi}

\begin{ex}
The groups $\textrm{Sp}(2n,\R),~\textrm{SU}(p,q)$ (with $p,q>0$) and $\SO_0(2,n)$ are of Hermitian type.
\end{ex}

Given a Lie group of Hermitian type $G$ and a morphism $\rho: \Gamma \to \G$, one can mimic the definition of the Euler class and define the \textit{Toledo invariant} of $\rho$ by
\[ \tau(\rho):= \frac{1}{2\pi}\int_\S f^*\omega.\]
Here, $f: \widetilde\S \to \X$ is a $\rho$-equivariant smooth map.

We have the following:

\begin{prop}[\cite{BIWmaximalrep}]
With the proper normalization of the symplectic form $\omega$, the Toledo invariant is an integer valued function and satisfies the \textit{Milnor-Wood inequality}
\[\forall [\rho]\in\Rep(\Gamma,\G),~\vert\tau([\rho])\vert\leq (2g-2)\text{rk}(G).\]
\end{prop}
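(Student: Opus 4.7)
My plan is to reinterpret the Toledo invariant as the evaluation of a class in continuous bounded cohomology on the fundamental class of $\S$, and then combine two classical norm estimates: the Gromov norm of a closed surface and the Domic--Toledo bound on the K\"ahler area of geodesic triangles in a Hermitian symmetric space.

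First I would check that $\tau$ is well-defined on $\Rep(\Gamma,\G)$ and integer-valued. Since $\X_\G$ is contractible, any two $\rho$-equivariant smooth maps $f_0,f_1:\widetilde\S\to\X_\G$ are $\rho$-equivariantly homotopic; the closedness of $\omega$ together with Stokes' theorem then forces $\int_\S f_0^*\omega=\int_\S f_1^*\omega$, so $\tau(\rho)$ depends only on the conjugacy class of $\rho$. With the proper normalization, $\omega/(2\pi)$ represents an integral $\G$-invariant generator of $H^2(\X_\G,\mathbb{Z})$, and pairing this class with $[\S]$ via a classifying-map argument gives $\tau([\rho])\in\mathbb{Z}$.

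Next I would realize $2\pi\tau$ as a pairing in bounded cohomology. Fix a basepoint $x_0\in\X_\G$ and define a continuous bounded cocycle $c_\omega:\G^3\to\R$ by
\[
c_\omega(g_1,g_2,g_3)=\int_{\Delta(g_1x_0,g_2x_0,g_3x_0)}\omega,
\]
where $\Delta(\cdot,\cdot,\cdot)$ denotes the geodesic triangle. This cocycle represents a class $\kappa_\G^b\in H^2_{cb}(\G,\R)$, and a standard computation with an equivariant triangulation of $\widetilde\S$ yields
\[
2\pi\tau([\rho])=\langle\rho^*\kappa_\G^b,[\S]\rangle.
\]

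The core input is then the estimate $\|c_\omega\|_\infty\le\pi\cdot\text{rk}(\G)$: the K\"ahler area of any geodesic triangle in $\X_\G$ is at most $\pi\,\text{rk}(\G)$. This is the Domic--Toledo theorem, which one proves by reducing to a maximal polydisc $(\H^2)^{\text{rk}(\G)}\hookrightarrow\X_\G$---the pullback of $\omega$ splits as a sum of Poincar\'e area forms, and each summand contributes at most $\pi$ by Gauss--Bonnet for ideal triangles in $\H^2$. Combined with the Gromov-norm equality $\|[\S]\|_1=-2\chi(\S)=4g-4$, this gives
\[
2\pi|\tau([\rho])|\le\|c_\omega\|_\infty\cdot\|[\S]\|_1\le\pi\cdot\text{rk}(\G)\cdot(4g-4),
\]
which is exactly $|\tau([\rho])|\le(2g-2)\,\text{rk}(\G)$. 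The main obstacle in this plan is the Domic--Toledo bound itself: it hinges on the polydisc theorem and on a careful analysis of the geometry of $\X_\G$ via Jordan triple systems.
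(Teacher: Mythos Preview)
The paper does not supply its own proof of this proposition: it is stated with a citation to \cite{BIWmaximalrep} and no argument is given. Your outline is essentially the bounded-cohomology approach of Burger--Iozzi--Wienhard and is correct in its overall structure: reinterpret $\tau$ via the bounded K\"ahler class, use the Domic--Toledo/Clerc--{\O}rsted sup-norm bound $\pi\,\mathrm{rk}(G)$, and pair with the Gromov norm $\|[\S]\|_1=4g-4$.

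One caveat on your sketch of the Domic--Toledo step: the phrase ``reducing to a maximal polydisc'' is misleading as stated. Three arbitrary points of $\X_\G$ need not lie in a common maximal polydisc, so one cannot simply restrict the geodesic triangle there and sum the $\H^2$-contributions. The actual proofs (Domic--Toledo for classical domains, Clerc--{\O}rsted in general) proceed differently, e.g.\ via the Harish-Chandra realization and an analysis on the Shilov boundary, or via Jordan-theoretic arguments. You rightly flag this as the main obstacle, but the polydisc heuristic should not be presented as the mechanism of the proof.
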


Goldman's Theorem motivates the following definition:

\begin{defi}
A representation $\rho: \Gamma \to G$ is \textit{maximal} if $\tau(\rho)=\text{rk}(G)(2g-2)$. We denote by $\Rep^{max}(\Gamma,\G)$ the set of conjugacy classes of maximal representations in $\G$.
\end{defi}

The set $\Rep^{max}(\Gamma,G)$ is a union of connected components of $\Rep(\Gamma,G)$ and gives a natural generalization of the classical Teichm\"uller space.

Maximal representations share many geometric and dynamical properties with Fuchsian representations. More precisely, any maximal representation $\rho: \Gamma \to G$ is \textit{Anosov} \cite{BILW} (as introduced by F. Labourie \cite{labouriehyperconvex} and O. Guichard, A. Wienhard \cite{wienhardanosov}) and so acts properly discontinuously and co-compactly on some open set $\Omega_\rho\subset G/P$, where $G/P$ is a flag manifold (that is, $P\subset G$ is a parabolic subgroup). In particular, maximal representations are holonomies of geometric structures.

In \cite{CTT}, we use the techniques presented here to explicitly describe the quotient $\Omega_\rho/\rho(\Gamma)$.

In these notes, we are interested in maximal representations into Hermitian Lie groups of rank $2$. Up to covering, those groups are $\textrm{PSp}(4,\R),~\textrm{PU}(2,n)$ and $\SO_0(2,n)$ (with $n\geq 2)$. The following is a rigidity result for maximal representations

\begin{theo}[\cite{burgeriozziwienhard}] Given a maximal representation $\rho: \Gamma \to G$ into a Hermitian Lie group $G$, there exists a subgroup of tube type $H\subset G$ and a compact subgroup $K\subset G$ such that $\rho(\Gamma)\subset H\times K \subset G$.
\end{theo}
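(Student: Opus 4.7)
The plan is to follow the bounded cohomology strategy of Burger, Iozzi and Wienhard, which reinterprets maximality as a \emph{tightness} property of the homomorphism $\rho$. First I would lift the Toledo invariant to continuous bounded cohomology: the K\"ahler form $\omega$ on $\X_\G$ has bounded symplectic area on geodesic triangles, hence defines a class $\kappa_\G^b \in H^2_{cb}(\G,\R)$, and the Toledo invariant is the evaluation of $\rho^*\kappa_\G^b \in H^2_b(\Gamma,\R) \cong H^2(\S,\R)$ on the fundamental class. The Milnor--Wood inequality then follows from $\|\kappa_\G^b\|_\infty \leq \text{rk}(\G)\|\kappa^b_{\SO_0(2,1)}\|_\infty$, and $\rho$ is maximal precisely when this norm inequality is an equality at the level of pullbacks. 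In particular, a maximal $\rho$ is \emph{tight}.

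Next I would construct a measurable $\rho$-equivariant boundary map $\phi: S^1 \to \mathcal{S}_\G$, where $\mathcal{S}_\G$ denotes the Shilov boundary of the bounded symmetric domain model of $\X_\G$. Existence uses the amenability of the $\Gamma$-action on $S^1 = \partial\H^2$ together with the fact that $\mathcal{S}_\G$ is a $\G$-boundary. The bounded K\"ahler class is represented by the Bergmann cocycle $\beta_\G: \mathcal{S}_\G^3 \to \R$, a measurable $\G$-invariant cocycle with $\|\beta_\G\|_\infty = \text{rk}(\G)$. Transferring the tightness of $\rho$ to the boundary map yields
\[ \beta_\G\bigl(\phi(x_1),\phi(x_2),\phi(x_3)\bigr) = \text{rk}(\G) \]
for almost every positively oriented triple $(x_1,x_2,x_3) \in (S^1)^3$.

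The key geometric ingredient is the classification of \emph{maximal triples}: three pairwise transverse points in $\mathcal{S}_\G$ realize $\beta_\G = \text{rk}(\G)$ if and only if they all lie in the Shilov boundary of a Hermitian subsymmetric domain $\mathcal{Y}\subset \X_\G$ of \textbf{tube type}. Applied to the essential image of $\phi$, which after passing to a reductive representative is Zariski dense in its $\rho(\Gamma)$-orbit closure in $\mathcal{S}_\G$, this forces the image of $\phi$ to lie in the Shilov boundary of a \emph{single} tube-type subdomain $\mathcal{Y}$. The stabilizer of $\mathcal{Y}$ in $\G$ decomposes (up to the center) as a product $H \times K$, where $H$ is the Hermitian tube-type subgroup preserving $\mathcal{Y}$ and $K$ is the compact group acting on the directions transverse to $\mathcal{Y}$ at a basepoint. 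Since $\rho(\Gamma)$ preserves $\mathcal{Y}$ by equivariance, one obtains $\rho(\Gamma) \subset H \times K$.

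The main obstacle is the rigidity step that forces the essential image of $\phi$ to lie in a single tube-type Shilov boundary, rather than merely to meet one tube-type subdomain per triple. This requires the fine structure of the Bergmann cocycle on $\mathcal{S}_\G$, in particular that the zero set of $\text{rk}(\G) - \beta_\G$ is a disjoint union of orbits of tube-type stabilizers, together with a measurable selection / ergodicity argument to show that a $\Gamma$-invariant measurable choice of such a tube-type domain must be constant.
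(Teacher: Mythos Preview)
The paper does not give its own proof of this theorem: it is quoted as a result of Burger, Iozzi and Wienhard with a bare citation \cite{burgeriozziwienhard}, and the text immediately moves on to listing the tube-type rank $2$ groups. There is therefore nothing in the paper to compare your argument against.

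That said, your sketch is a faithful outline of the original Burger--Iozzi--Wienhard strategy: reinterpret maximality via the bounded K\"ahler class, deduce tightness, build a $\rho$-equivariant measurable boundary map $\phi$ to the Shilov boundary, use that the pullback of the Bergmann cocycle saturates its sup norm almost everywhere, and invoke the classification of maximal triples to trap the essential image of $\phi$ in the Shilov boundary of a single tube-type subdomain. You have also correctly flagged the genuine difficulty, namely passing from ``every maximal triple lies in \emph{some} tube-type piece'' to ``all of $\phi(S^1)$ lies in \emph{one} tube-type piece''; in the reference this is handled through the structure theory of the Zariski closure of $\rho(\Gamma)$ together with the Hermitian hull construction, not just a soft ergodicity argument. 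One small caution: the containment $H\times K\subset G$ in the statement should be read up to finite center/isogeny, as you indicated, since the stabilizer of a maximal tube-type subdomain is only an almost-direct product in general.
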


Tube type Hermitian Lie groups of rank $2$ are (up to covering) $\textrm{PSp}(4,\R),~\textrm{PU}(2,2)$ and $\SO_0(2,n)$. Using the exceptional isomorphisms $\textrm{PSp}(4,\R)\cong \SO_0(2,3)$ and $\textrm{PU}(2,2)\cong \SO_0(2,4)$ (up to index 2), we obtain the following:

\begin{cor}\label{cor:maximalrepinrank2}
Up to a compact factor, any maximal representation $\rho: \Gamma \to G$ into a rank 2 Hermitian Lie group $G$ factors through $\SO_0(2,n)$ for some $n\geq 2$.
\end{cor}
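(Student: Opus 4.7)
The plan is to assemble the three ingredients already hinted at in the excerpt: the rigidity theorem of Burger-Iozzi-Wienhard stated just above, the classification of rank-$2$ tube-type simple Hermitian Lie groups, and the exceptional isomorphisms in low rank.

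First, I would apply the rigidity theorem to a maximal representation $\rho: \Gamma \to G$. It yields a tube-type Hermitian subgroup $H\subset G$ and a compact subgroup $K\subset G$ such that $\rho(\Gamma) \subset H\times K$. Writing $\rho = (\rho_H,\rho_K)$, it suffices, up to the compact factor $\rho_K$, to prove the statement for $\rho_H: \Gamma \to H$. Because $K$ is compact its symmetric space is a point, so the Toledo invariant of $\rho$ agrees with that of $\rho_H$, and $\rho_H$ is maximal in $H$. The Milnor-Wood inequality applied inside $H$ then gives $2(2g-2) = \tau(\rho_H) \leq (2g-2)\,\mathrm{rk}(H)$, hence $\mathrm{rk}(H) \geq 2$; combined with $\mathrm{rk}(H) \leq \mathrm{rk}(G) = 2$ this yields $\mathrm{rk}(H) = 2$.

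Second, I would invoke the classification of simple Hermitian Lie groups of tube type in rank $2$: up to isogeny the list consists of exactly $\mathrm{PSp}(4,\R)$, $\mathrm{PU}(2,2)$, and $\SO_0(2,n)$ for $n \geq 2$. This is a standard extract from Cartan's classification, the other Hermitian families (such as $\mathrm{SU}(p,q)$ with $p \neq q$) being excluded by the tube-type condition. The exceptional isomorphisms $\mathrm{PSp}(4,\R) \cong \SO_0(2,3)$ and $\mathrm{PU}(2,2) \cong \SO_0(2,4)$ (both valid up to index $2$) then collapse the first two cases into the $\SO_0(2,n)$ family. Therefore $\rho_H$ always factors through some $\SO_0(2,n)$ with $n \geq 2$, which, combined with the compact factor $\rho_K$, gives the conclusion.

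There is no serious analytic obstacle here: the corollary is a packaging statement, and the argument is essentially the one outlined in the paragraph preceding it. The only care required concerns the phrase \emph{``up to a compact factor''} (one must check that projecting off $K$ preserves maximality, which is immediate since a compact symmetric space is a point) and the \emph{``up to index $2$''} ambiguities in the exceptional isomorphisms, both of which can be absorbed into the compact factor. The conceptual weight lies entirely in the rigidity theorem, which is taken as a black box.
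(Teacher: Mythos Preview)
Your proposal is correct and follows essentially the same approach as the paper: the corollary is stated there as an immediate consequence of the Burger--Iozzi--Wienhard rigidity theorem together with the listed classification of rank-$2$ tube-type Hermitian groups and the exceptional isomorphisms $\mathrm{PSp}(4,\R)\cong\SO_0(2,3)$, $\mathrm{PU}(2,2)\cong\SO_0(2,4)$. You simply make explicit the two verifications (that projecting off $K$ preserves maximality, and that $\mathrm{rk}(H)=2$) which the paper leaves implicit.
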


In particular, we will study here maximal representations into $G=\SO_0(2,n+1)$. As explained in Remark \ref{r:AdScase}, the case $n=1$ is very special, so we will assume $n\geq 2$.

\section{Higgs bundles}

Here we describe the non-Abelian Hodge correspondence for the Lie group $G:=\SO_0(2,n+1)$. Recall that $G$ is the connected component of the identity of the group of orthogonal transformations of $\R^{2,n+1}$. The subgroup $K:=\SO(2)\times \SO(n+1)\subset G$ is a maximal compact subgroup and we denote by $\frak{X}:=G/K$ the associated symmetric space. A point $x\in \frak{X}$ corresponds to an orthogonal splitting $\R^{2,n+1}=\R^{2,0}\oplus\R^{0,n+1}$.

Given a Riemann surface structure $X\in\T(\S)$ on $\S$, we denote by $\mathcal{O}$ the trivial holomorphic vector bundle over $X$ and by $\mathcal{K}$ the canonical bundle. The slope of a holomorphic vector bundle $\mathcal{E}\to X$ is $\mu(\mathcal{E})=\frac{\text{deg}(\mathcal{E})}{\text{rk}(\mathcal{E})}$ where $\text{deg}(\mathcal{E})$ and $\text{rk}(\mathcal{E})$ are respectively the degree and the rank of $\mathcal{E}$.

\begin{defi}
A \textit{$\textrm{GL}(n,\mathbb{C})$-Higgs bundle on $X$} is a pair $(\mathcal{E},\Phi)$, where $\mathcal{E}$ is a rank $n$ holomorphic vector bundle over $X$, and $\Phi\in H^0(X,\mathcal{K}\otimes \text{End}(\mathcal{E}))$.

Such a Higgs bundle is \textit{stable} if any $\Phi$-invariant holomorphic sub-bundle $\mathcal{F}\subset \mathcal{E}$ satisfies $\mu(\mathcal{F})<\mu(\mathcal{E})$. It is \textit{poly-stable} if it is the direct sum of stable $\textrm{GL}(n_j,\mathbb{C})$ Higgs bundles with same slope.
\end{defi}

We are interested here in $G$-Higgs bundle:

\begin{defi}
A \textit{$G$-Higgs bundle over $X$} is a $\textrm{GL}(n+3,\mathbb{C})$-Higgs bundle $(\mathcal{E},\Phi)$ such that:
\begin{itemize}
	\item $\mathcal{E}$ reduces to a holomorphic $\SO(2,\mathbb{C})\times\SO(n+1,\mathbb{C})$ bundle. That is, $\mathcal{E}=\mathcal{U} \oplus \mathcal{W}$, where $\mathcal{U}$ is a rank $2$ holomorphic vector bundle with trivialization $\det\mathcal{U}=\mathcal{O}$ and a non-generate holomorphic quadratic form $q_{\mathcal{U}}$ (and similarly for $\mathcal{W}$).
	\item $\Phi$ writes $\left( \begin{array}{ll} 0 & \eta^\dagger \\ \eta & 0\end{array}\right)$ in the splitting $\mathcal{E}=\mathcal{U}\oplus \mathcal{W}$, where $\eta\in H^0(X,\mathcal{K}\otimes \text{Hom}(\mathcal{U},\mathcal{W}))$ and $\eta^\dagger$ is defined by the equation $q_\mathcal{W}(\eta x,y)=q_{\mathcal{U}}(x,\eta^\dagger y)$.
\end{itemize}
A $G$-Higgs bundle is stable (respectively poly-stable), if it is as a $\textrm{GL}(n+3,\mathbb{C})$-Higgs bundle.
\end{defi}

Schematically, a $G$ Higgs bundle $(\mathcal{E},\Phi)$ will be written as follow:
$$(\mathcal E,\Phi) = \xymatrix{\mathcal{U} \ar@/_/[r]_\eta & \mathcal{W} \ar@/_/[l]_{\eta^\dagger}}.$$

Using gauge theory, one can construct the moduli space $\mathcal{M}(X,G)$ of poly-stable $G$-Higgs bundles over $X$. The main interest for us is the following correspondence, called \textit{non-Abelian Hodge correspondence}:

\begin{theo}[Non-Abelian Hodge correspondence]
For any $X\in \T(\S)$, there is a canonical bijection $\Rep(\Gamma,G)\cong \mathcal{M}(X,G)$.
\end{theo}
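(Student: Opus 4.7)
The plan is to pass through the intermediate category of harmonic bundles (equivalently, solutions of Hitchin's self-duality equations), establishing the bijection in both directions. Fix $X\in\T(\S)$ and work with reductive representatives in each conjugacy class in $\Rep(\Gamma,\G)$, which is legitimate because every conjugacy class contains one.

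For the direction $\Rep(\Gamma,\G)\to \mathcal{M}(X,\G)$, start with a reductive $\rho:\Gamma\to\G$ and form the flat principal $\G$-bundle $E_\rho:=\widetilde\S\times_\rho \G$. The Corlette--Donaldson theorem produces a $\rho$-equivariant harmonic map $h:\widetilde\S\to\X$, or equivalently a harmonic $\K$-reduction of $E_\rho$. Using the Cartan decomposition $\mathfrak{g}=\mathfrak{k}\oplus \mathfrak{m}$, the flat connection decomposes as $D=d_A+\Psi$, where $d_A$ is a metric connection on the reduction and $\Psi$ is a $1$-form with values in the associated $\mathfrak{m}$-bundle. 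Koszul--Malgrange endows the complexified bundle $\mathcal{E}$ with the holomorphic structure $\bar\partial_A:=(d_A)^{0,1}$, and one sets $\Phi:=\Psi^{1,0}$. Flatness of $D$ together with harmonicity of $h$ translate into Hitchin's self-duality equations
\[ F_{d_A}+[\Phi,\Phi^*]=0,\qquad \bar\partial_A\Phi=0, \]
so $\Phi$ is holomorphic, and a standard Uhlenbeck-type argument shows the resulting Higgs bundle is polystable. The $\G$-structure on $E_\rho$ (the orthogonal splitting $\R^{2,n+1}=\R^{2,0}\oplus\R^{0,n+1}$ carried by $h$) ensures that $\mathcal{E}$ splits as $\mathcal{U}\oplus\mathcal{W}$ with the required quadratic forms and that $\Phi$ has the prescribed off-diagonal block form.

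For the reverse direction, given a polystable $\G$-Higgs bundle $(\mathcal{E},\Phi)$, the Hitchin--Simpson theorem produces a Hermitian metric compatible with the $\G$-structure solving the same self-duality equations. Setting $D:=d_A+\Phi+\Phi^*$ then yields a flat $\G$-connection whose monodromy is a reductive $\rho:\Gamma\to\G$. Uniqueness (modulo gauge) of the harmonic metric and of the Hermite--Einstein metric shows the two constructions are mutually inverse on the level of isomorphism classes, giving the bijection.

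The main obstacle is analytical: both existence theorems (Corlette--Donaldson for equivariant harmonic maps into the non-positively curved target $\X$, and Hitchin--Simpson for Hermite--Einstein metrics on polystable Higgs bundles) are substantial non-linear PDE results and are typically taken as black boxes. A secondary conceptual difficulty is verifying that the real $\G$-structure is preserved throughout: one must check that the harmonic metric respects the orthogonal forms $q_\mathcal{U}, q_\mathcal{W}$ and that the Higgs field retains its block shape. This is handled by working equivariantly under the relevant real form of $\G^{\mathbb{C}}$ and invoking uniqueness of the harmonic metric, so that any ambiguity is absorbed by automorphisms preserving the $\G$-reduction.
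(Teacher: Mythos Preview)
Your proposal is correct and follows essentially the same route as the paper: both directions invoke Corlette's theorem for the harmonic $K$-reduction and the Hitchin--Simpson theorem for the Hermitian metric solving the self-duality equations, with the flat connection decomposed into its metric and self-adjoint parts and the holomorphic structure and Higgs field read off from the $(0,1)$- and $(1,0)$-components. The only cosmetic differences are that the paper works with the associated vector bundle $(\widetilde\S\times\R^{2,n+1})/\Gamma$ rather than the principal bundle, and is more explicit than you are about recovering the real form via the anti-linear involution $\lambda$ with $h(x,y)=q(x,\lambda y)$; one small quibble is that your phrase ``Uhlenbeck-type argument'' for polystability is a misnomer---this implication is the easy direction of Hitchin--Simpson, not a compactness result.
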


\begin{proof}[Sketch of proof]
Fix $\rho\in\Rep(\Gamma,G)$ and consider the associated vector bundle 
\[E_\rho:= \big( \widetilde\S\times \R^{2,n+1}\big)/\Gamma,\]
where $\Gamma$ acts diagonally on $\widetilde\S\times \R^{2,n+1}$ (by deck transformations on $\widetilde\S$ and via $\rho$ on $\R^{2,n+1}$). The bundle $E_\rho$ is naturally equipped with a flat connection $\nabla$ and a $\nabla$-parallel signature $(2,n+1)$ metric $g$. The holonomy of $\nabla$ corresponds to $\rho$.

A reduction of $E_\rho$ to a $\SO(2)\times\SO(n+1)$ bundle is given by a splitting $E_\rho=U\oplus V$, where $U$ is a positive definite rank $2$ vector bundle and $V=U^\bot$. Such a splitting defines a metric $g_U\oplus (-g_V)$ on $E_\rho$ and is given by a $\rho$-equivariant map $f: \widetilde\S \to \frak{X}$ where $\frak{X}$ is the symmetric space of $G$.

Given a Riemann surface structure $X\in\T(\S)$, Corlette's Theorem \cite{corlette} implies the existence of a unique $\rho$-equivariant map $f:\widetilde X\to \frak{X}$ which is harmonic, that is, a map such that the $(1,0)$-part of the differential $df$ is a holomorphic section of $T^*X\otimes f^*T\frak{X}\otimes \mathbb{C}$.

In particular, for each $X\in\T(\S)$, we get a canonical splitting $E_\rho=U\oplus V$ as above. In such a splitting, the connection $\nabla$ decomposes as
\[\nabla = A + \Psi,\]
where $A=\left(\begin{array}{ll}A^U & 0 \\ 0 & A^V \end{array}\right)$ is unitary with respect to the metric $g_U\oplus(-g_V)$, and $\Psi= \left(\begin{array}{ll} 0 & \psi^\dagger \\
\psi & 0 \end{array}\right)$, where $\psi^\dagger$ satisfies $-g_V(\psi x,y)=g_U(x,\psi^\dagger y)$.

The complexification $\mathcal{E}:=E_\rho\otimes \mathbb{C}$ together with the holomorphic structure defined by the $(0,1)$ part of $A$ and the $\mathbb{C}$-linear extension $q:=q_\mathcal{U}\oplus q_\mathcal{W}$ of $g_U\oplus(-g_V)$ is a $\SO(2,\mathbb{C}\times \SO(n+1,\mathbb{C})$ holomorphic vector bundle. The $(1,0)$ part $\Phi$ of $\Psi$ is holomorphic, so the pair $(\mathcal{E},\Phi)$ is a $G$-Higgs bundle.

Denoting by $\lambda: \mathcal{E}\to\mathcal{E}$ the anti-linear involution given by conjugation, we obtain a Hermitian metric $h$ on $\mathcal{E}$ defined by $h(x,y)=q(x,\lambda y)$. The flatness of $\nabla$ gives the \textit{Hitchin equations}
$$F_A+[\Phi,\Phi^{*h}]=0,$$
 where $F_A$ is the curvature of the unitary connection $A$ and $\Phi^{*h}$ is defined by $h(\Phi x,y)=h(x,\Phi^{*h}y)$. Hitchin-Simpson Theorem \cite{simpsonVHS,hitchinselfduality} thus implies that $(\mathcal{E},\Phi)$ is poly-stable.

\medskip

Conversely, given a poly-stable $G$-Higgs bundle $(\mathcal{E},\Phi)$, there exists a unique Hermitian metric $h$ on $\mathcal{E}$ such that $F_A+[\Phi,\Phi^{*h}]=0$. Writing $h(x,y)=q(x,\lambda y)$ for some anti-linear involution $\lambda: \mathcal{E} \to \mathcal{E}$ preserving the splitting $\mathcal{E}=\mathcal{U}\oplus\mathcal{W}$, we get that the flat connection $\nabla:= A+\Phi+\Phi^{*h}$ preserves the real sub-bundle $E_\lambda:=\text{Fix}(\lambda)\subset \mathcal{E}$. This sub-bundle is endowed with a signature $(2,n+1)$ metric $g:=q_\mathcal{U}\oplus (-q_\mathcal{W})$ and the holonomy of $\nabla$ gives a reductive representation $\rho: \Gamma \to G$.
\end{proof}

One can actually go further: given a poly-stable $G$-Higgs bundle $(\mathcal{E},\Phi)$ over $X$, the $\SO(2,\mathbb{C})$ bundle $(\mathcal{U},q_\mathcal{U})$ can always be decomposed as $\left(\mathcal{L}\oplus\mathcal{L}^{-1},\left(\begin{array}{ll} 0 & 1 \\ 1 & 0 \end{array}\right) \right)$ where $\mathcal{L}$ is a holomorphic line bundle. Moreover, the degree of $\mathcal{L}$ is equal to half of the Toledo invariant of the associated representation $\rho\in\Rep(\Gamma,G)$.

In particular, one can decomposes $\eta=(\alpha,\gamma): \mathcal{L}\oplus\mathcal{L}^{-1}\to \mathcal{W}\otimes \mathcal{K}$ and $\eta^{\dagger}=(\gamma^\dagger,\alpha^\dagger): \mathcal{W}\to \big(\mathcal{L}\oplus\mathcal{L}^{-1}\big)\otimes \mathcal{K}$.

When the Higgs bundle $(\mathcal{E},\Phi)$ is associated to a maximal representation, the degree of $\mathcal{L}$ is $2g-2$. By stability, both $\alpha$ and the composition 
$$\mu:~\mathcal{L} \overset{\alpha}{\longrightarrow} \mathcal{W}\otimes \mathcal{K} \overset{\alpha^\dagger\otimes Id}{\longrightarrow} \mathcal{L}^{-1}\mathcal{K}^2$$
 are non-vanishing. In particular, $\mu\in H^0(X,\mathcal{K}^2\mathcal{L}^{-2})$ is a non-vanishing section of a degree $0$ line bundle, so $\mathcal{K}^2\mathcal{L}^{-2}\cong\mathcal{O}$. Moreover, $\mathcal{I}:= \text{Im}(\alpha)\otimes \mathcal{K}^{-1}\subset \mathcal{W}$ is a non-isotropic line sub-bundle with $\mathcal{I}^2=\mathcal{O}$. Note also that $\mathcal{L}\cong \mathcal{I}\mathcal{K}$.

Setting $\mathcal{V}$ the orthogonal of $\mathcal{I}$ in $\mathcal{W}$ and writing $\gamma = (q_2,\beta):~\mathcal{I}\mathcal{K}^{-1} \to \big(\mathcal{I}\oplus \mathcal{V} \big)\otimes \mathcal{K}$, one obtains the following corollary:

\begin{cor}\label{cor:maximalhiggsbundles}
A $G$ Higgs bundle $(\mathcal{E},\Phi)$ over $X$ associated to a maximal representation $\rho: \Gamma \to G$ decomposes as follow:

\[(\mathcal{E},\Phi)= \xymatrix{
\mathcal{I}\mathcal{K} \ar@/_/[r]_1 & \mathcal{I}  \ar@/_/[r]_1 \ar@/_/[l]_{q_2} & \mathcal{I}\mathcal{K}^{-1} \ar@/_/[l]_{q_2} \ar@/^/[ld]^{\beta} \\
& \mathcal{V} \ar@/^/[ul]^{\beta^\dagger} }.\]
Here $q_2=\frac{1}{2}\text{tr}(\Phi^2)\in H^0(X,\mathcal{K}^2)$ is a quadratic differential, $\mathcal{I}$ is a square root of the trivial bundle and $\beta\in H^0(X,\mathcal{I}\mathcal{K}^2\mathcal{V})$.
\end{cor}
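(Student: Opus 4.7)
The plan is to assemble the structural facts sketched in the paragraph preceding the statement into the explicit diagram. For the maximal polystable Higgs bundle $(\mathcal{E},\Phi)$ I already have $\mathcal{U}=\mathcal{L}\oplus\mathcal{L}^{-1}$ with $\deg\mathcal{L}=2g-2$, and polystability forces both $\alpha$ and $\mu=\alpha^\dagger\circ\alpha$ to be nowhere vanishing: a zero of $\alpha$ would let $\mathcal{L}$ sit inside $\ker\eta$ and generate a destabilizing sub-Higgs bundle, and a zero of $\mu$ would let an appropriate twist of $\mathcal{L}$ play the same role. Non-vanishing of $\mu\in H^0(X,\mathcal{K}^2\mathcal{L}^{-2})$ in a degree-$0$ line bundle yields $\mathcal{K}^2\mathcal{L}^{-2}\cong\mathcal{O}$, so $\mathcal{I}:=\mathcal{L}\mathcal{K}^{-1}$ squares to $\mathcal{O}$ and $\mathcal{L}=\mathcal{I}\mathcal{K}$. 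The image of $\alpha$ defines the line sub-bundle $\mathcal{I}=\text{Im}(\alpha)\otimes\mathcal{K}^{-1}\subset\mathcal{W}$, which is non-isotropic precisely because $\mu$ is non-vanishing, giving the orthogonal decomposition $\mathcal{W}=\mathcal{I}\oplus\mathcal{V}$ and hence $\mathcal{E}=(\mathcal{I}\mathcal{K}\oplus\mathcal{I}\mathcal{K}^{-1})\oplus(\mathcal{I}\oplus\mathcal{V})$.

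Next I would read the blocks of $\Phi$ in this splitting. The component $\alpha:\mathcal{I}\mathcal{K}\to(\mathcal{I}\oplus\mathcal{V})\otimes\mathcal{K}$ has image the $\mathcal{I}$-factor and, under the canonical identification of $\text{Im}(\alpha)$ with $\mathcal{I}\mathcal{K}$, is the identity: this is the arrow $\mathcal{I}\mathcal{K}\to\mathcal{I}$ labelled $1$. The component $\gamma:\mathcal{I}\mathcal{K}^{-1}\to(\mathcal{I}\oplus\mathcal{V})\otimes\mathcal{K}$ splits as $(q_2,\beta)$ with $q_2\in H^0(X,\mathcal{K}^2)$ (using $\mathcal{I}^2=\mathcal{O}$) and $\beta\in H^0(X,\mathcal{I}\mathcal{K}^2\mathcal{V})$, giving the two arrows leaving $\mathcal{I}\mathcal{K}^{-1}$. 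The remaining arrows come from the dagger identity $q_\mathcal{W}(\eta u,w)=q_\mathcal{U}(u,\eta^\dagger w)$: taking $u$ in the $\mathcal{L}$-factor forces $\alpha^\dagger:\mathcal{W}\to\mathcal{L}^{-1}\mathcal{K}=\mathcal{I}$ to be the identity on $\mathcal{I}$ and zero on $\mathcal{V}$, producing the arrow $\mathcal{I}\to\mathcal{I}\mathcal{K}^{-1}$ labelled $1$; taking $u$ in the $\mathcal{L}^{-1}$-factor identifies the $\mathcal{I}\to\mathcal{I}\mathcal{K}^2$ component of $\eta^\dagger$ with the same quadratic differential $q_2$, and the $\mathcal{V}\to\mathcal{I}\mathcal{K}^2$ component with the adjoint $\beta^\dagger$ of $\beta$ under $q_\mathcal{V}$. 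The identification $q_2=\tfrac{1}{2}\text{tr}(\Phi^2)$ then follows from a direct block computation: $\Phi^2$ is block diagonal in the $\mathcal{U}\oplus\mathcal{W}$ splitting and both diagonal entries of its $\mathcal{U}$-block equal $q_2$.

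The one conceptually nontrivial step is the use of polystability to rule out zeros of $\alpha$ and $\mu$; everything else is bookkeeping of the decomposition together with applications of the defining dagger identity. The trickiest notational point will be tracking the $\mathcal{K}$-twists and the chosen trivialization of $\mathcal{I}^2$ throughout the dagger computation, since this is precisely what guarantees that the two ``$1$'' labels in the diagram denote the canonical identity and the two ``$q_2$'' labels denote the same quadratic differential.
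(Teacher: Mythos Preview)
Your proposal is correct and follows essentially the same route as the paper: the corollary is stated without a separate proof environment, and the argument is precisely the discussion in the preceding paragraph, which you have faithfully expanded. You supply more detail than the paper does on two points---the explicit dagger computation giving the arrows out of $\mathcal{I}$ and $\mathcal{V}$, and the block computation identifying $q_2$ with $\tfrac{1}{2}\text{tr}(\Phi^2)$---but the skeleton (stability $\Rightarrow$ $\alpha,\mu$ non-vanishing $\Rightarrow$ $\mathcal{L}^2\cong\mathcal{K}^2$ $\Rightarrow$ define $\mathcal{I},\mathcal{V}$ and read off $(q_2,\beta)$) is identical.
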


The quadratic differential $q_2\in H^0(X,\mathcal{K}^2)$ in the previous parametrization has a very nice geometric interpretation. Up to a factor $\frac{1}{2}$, it is the \textit{Hopf differential} of the associated equivariant harmonic map $f: \widetilde X \to \frak{X}$ given by Corlette's Theorem. It measures the lack of conformality of the harmonic map. By a theorem of Eells-Sampson \cite{eellssampson}, the Hopf differential vanishes if and only if $f: \widetilde X \to \frak{X}$ is a minimal immersion. 

\section{Proof of the main Theorem}

\subsection{Pseudo-hyperbolic geometry}

Denote by $\R^{2,n+1}$ the real vector space $\R^{n+3}$ endowed with the quadratic form $\q$ of signature $(2,n+1)$ given by $\q(x)=x_1^2+x_2^2-x_3^2-...-x_{n+3}^2$. The group $G=\SO_0(2,n+1)$ is thus the connected component of the identity of the group of orthogonal transformations of $\R^{2,n+1}$.

The \textit{pseudo-hyperbolic space $\H^{2,n}$} is the set of negative-definite lines in $\R^{2,n+1}$:
\[\H^{2,n}:= \P \big(\{x\in\R^{2,n+1},~\q(x)<0\} \big),\]
where $\P:~\R^{n+3}\setminus\{0\} \to \R\P^{n+2}$ is the usual projection.

The quadratic form $\q$ induces a signature $(2,n)$ metric of constant curvature $-1$ on $\H^{2,n}$. The group $G$ acts by isometries on $\H^{2,n}$.

The boundary $\partial\H^{2,n}$ of $\H^{2,n}$ in $\R\P^{n+2}$ is usually called the \textit{Einstein Universe}. It corresponds to the set of isotropic lines in $\R^{2,n+1}$ and carries a natural conformally flat metric of signature $(1,n)$.

The projection $\P:~\R^{n+3}\setminus\{0\} \to \R\P^{n+2}$ restricts to a $2$-to-$1$ covering from the quadric $\widehat\H^{2,n}:=\{x\in\R^{2,n+1},~\q(x)=-1\}$ onto $\H^{2,n}$. 

Complete geodesics in $\H^{2,n}$ are the intersection of projective lines in $\R\P^{n+2}$ with $\H^{2,n}$. The quadratic form $\q$ on $\R^{2,n+1}$ gives a trichotomy for geodesics in $\H^{2,n}$:
\begin{itemize}
	\item \textit{Space-like geodesics} are the projection of signature $(1,1)$-planes in $\R^{2,n+1}$. They intersect $\partial\H^{2,n}$ in two distinct points and the restriction of the metric is positive definite.
	\item \textit{Time-like geodesics} are projection of signature $(0,2)$-planes in $\R^{2,n+1}$. They do not intersect the boundary $\partial\H^{2,n}$ and the restriction of the metric is negative definite.
	\item \textit{Light-like geodesics} are projection of degenerate planes in $\R^{2,n+1}$. They intersect $\partial\H^{2,n}$ in one point and the restriction of the metric is degenerate.
\end{itemize}

In particular, two points $x,y\in\widehat\H^{2,n}$ are on a space-like (respectively time-like and light-like) geodesic if an only if $\vert \langle x,y\rangle\vert >1$ (respectively $\vert \langle x,y\rangle\vert <1$ and $\vert \langle x,y\rangle\vert =1$), where $\langle .,.\rangle$ is the restriction of the quadratic form on $\widehat\H^{2,n}\subset \R^{2,n+1}$.

\medskip

\noindent\textbf{Maximal immersions.} An immersion $u: S \to \H^{2,n}$ from a surface $S$ into $\H^{2,n}$ is called \textit{space-like} if the induced metric is positive definite. A \textit{maximal space-like immersion} is a space-like immersion $u: S\to \H^{2,n}$ such that any local deformation strictly decreases the area of the induced metric. As in Riemannian geometry, an immersion is maximal if and only if the mean curvature vector field vanishes everywhere.

The following can be found in \cite{eellssampson} in the Riemannian setting:

\begin{prop}[Ells-Sampson]
Given a Riemann surface structure $X\in\T(\S)$ on $\S$, a space-like immersion $u: X \to \H^{2,n}$ is maximal if and only if it is harmonic and conformal.
\end{prop}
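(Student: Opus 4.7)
The plan is to translate both conditions---harmonicity and maximality---into a single differential identity via the ambient embedding $\widehat{\H}^{2,n}\subset \R^{2,n+1}$, and then compare the two under the conformality hypothesis.

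First I would fix a local conformal coordinate $z$ on $X$ and lift $u$ locally to a map $\widehat{u}: X\to \widehat{\H}^{2,n}$. Since $\widehat{\H}^{2,n}$ is the quadric $\{\q=-1\}$ with the position vector $\widehat{u}$ itself serving as a time-like unit normal ($\langle \widehat{u},\widehat{u}\rangle=-1$), the Gauss formula reads
$$\nabla^{\H^{2,n}}_V W = \nabla^{\R^{2,n+1}}_V W - \langle V,W\rangle\,\widehat{u}.$$
Applied to $V=\partial_z\widehat{u}$ and $W=\partial_{\bar z}\widehat{u}$, together with $\langle\partial_z\widehat{u},\widehat{u}\rangle=0$ obtained by differentiating $\langle\widehat{u},\widehat{u}\rangle=-1$, this yields the closed-form expression
$$\tau(u) = 4\bigl(\partial_z\partial_{\bar z}\widehat{u} - \langle \partial_z\widehat{u},\partial_{\bar z}\widehat{u}\rangle\,\widehat{u}\bigr)$$
for the tension field of $u: (X,|dz|^2)\to\H^{2,n}$.

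Next I would use that conformality of $u$ with respect to $X$ is equivalent to the vanishing of the Hopf differential $\langle\partial_z\widehat{u},\partial_z\widehat{u}\rangle\,dz^2$. Under this hypothesis, differentiating $\langle\partial_z\widehat{u},\partial_z\widehat{u}\rangle=0$ in $\bar z$ (and taking the complex conjugate) gives $\langle \partial_z\partial_{\bar z}\widehat{u},\partial_z\widehat{u}\rangle=\langle \partial_z\partial_{\bar z}\widehat{u},\partial_{\bar z}\widehat{u}\rangle=0$, so that $\tau(u)$ is automatically orthogonal to the tangent plane of $u(X)$ in $T\H^{2,n}$. A trace computation then identifies $\tau(u) = 2\lambda^2\, H$, where $\lambda^2=2\langle \partial_z\widehat{u},\partial_{\bar z}\widehat{u}\rangle>0$ is the conformal factor of the induced Riemannian metric and $H$ is the mean curvature vector. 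This gives the equivalence harmonic $\Leftrightarrow$ maximal in the conformal case, which is the ``if'' direction of the proposition.

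For the converse, any maximal space-like immersion endows $\S$ with a unique conformal class (namely that of $u^*\q$); reading the proposition with $X$ equal to this class makes $u$ tautologically conformal, whereupon the identity above forces harmonicity. I do not expect any serious obstacle: the argument is a direct pseudo-Riemannian transcription of the classical Eells--Sampson calculation, and the only departure from the Riemannian setting is the minus sign in $\langle\widehat{u},\widehat{u}\rangle=-1$, which enters only through the Gauss formula and is a matter of bookkeeping.
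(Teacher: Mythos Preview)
The paper does not supply its own proof of this proposition; it is stated as a result of Eells--Sampson with a reference to the Riemannian case. Your argument---computing the tension field via the ambient quadric embedding $\widehat\H^{2,n}\subset\R^{2,n+1}$ and identifying it, under the conformality hypothesis, with a positive multiple of the mean curvature vector---is precisely the standard Eells--Sampson computation transcribed to signature $(2,n)$, and it is correct. The Gauss decomposition $D_VW=\nabla^{\H^{2,n}}_VW+\langle V,W\rangle\,\widehat u$ you use is exactly the one the paper itself invokes later in the uniqueness argument.

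One minor remark on the converse: as the proposition is literally phrased (with $X\in\T(\S)$ fixed in advance), ``maximal $\Rightarrow$ conformal with respect to $X$'' can of course fail. Your reinterpretation, taking $X$ to be the conformal class of the induced metric $u^*\q$, is the intended reading and is handled correctly.
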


\subsection{Existence part}\label{s:existence}

Here, we prove the existence part of the Main Theorem:

\begin{prop}
Given a maximal representation $\rho\in\Rep^{max}(\Gamma,G)$, there exists a $\rho$-equivariant maximal space-like immersion $u:\widetilde\S\to\H^{2,n}$.
\end{prop}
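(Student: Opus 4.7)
The strategy is the one advertised in the introduction: use Corollary \ref{cor:maximalhiggsbundles} together with the non-Abelian Hodge correspondence, after choosing a Riemann surface structure on which the associated equivariant harmonic map is a minimal immersion.

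First I would find $X\in\T(\S)$ such that the $\rho$-equivariant harmonic map $f:\widetilde X\to\X$ given by Corlette's theorem is conformal. Such an $X$ is obtained as a critical point (in fact a minimum) of the energy functional on Teichm\"uller space, whose properness for maximal representations is available from the Anosov property. By the geometric interpretation of $q_2=\frac{1}{2}\mathrm{tr}(\Phi^2)$ as the Hopf differential of $f$, this amounts to the vanishing of $q_2$, so that the $G$-Higgs bundle attached to $\rho$ on $X$ takes the strictly cyclic form
\[\mathcal{I}\mathcal{K}\xrightarrow{1}\mathcal{I}\xrightarrow{1}\mathcal{I}\mathcal{K}^{-1}\xrightarrow{\beta}\mathcal{V}\]
(plus the adjoint arrows). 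Applying Hitchin--Simpson produces a harmonic metric $h$, and together with the real structure $\lambda$ one obtains a refinement of the real orthogonal splitting $E_\rho=U\oplus V$ into $E_\rho=U\oplus I\oplus V'$, with $I$ a negative-definite real line sub-bundle coming from $\mathcal{I}$; this is the reduction of the flat bundle to $\SO(2)\times\mathrm{S}(\mathrm{O}(1)\times\mathrm{O}(n))$.

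Trivialising the flat bundle over the universal cover gives $\widetilde{E_\rho}\cong\widetilde\S\times\R^{2,n+1}$, and the line sub-bundle $I$ becomes a $\rho$-equivariant map $u:\widetilde\S\to\H^{2,n}$. To check that $u$ is a maximal space-like immersion, I would decompose $\nabla=A+\Phi+\Phi^{*h}$ and use that $A$ preserves the refined splitting; the differential $du$ is then encoded by the off-diagonal pieces of $\Phi+\Phi^{*h}$ at the summand $\mathcal{I}$. With $q_2=0$, the only arrow out of $\mathcal{I}$ in the cyclic diagram is the trivialisation $\mathcal{I}\to\mathcal{L}^{-1}\otimes\mathcal{K}\cong\mathcal{I}$, so the $(1,0)$-part $\partial u$ is nowhere vanishing and takes values in $\mathcal{L}^{-1}\subset\mathcal{U}$, where $\mathcal{U}$ is the complexification of the positive-definite real sub-bundle $U\subset I^\perp$. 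This immediately gives that $u$ is an immersion, that it is space-like (the image of $du$ lies in $U$), and that it is conformal ($\mathcal{L}^{-1}$ is isotropic for the complex-bilinear extension of the metric on $\mathcal{U}$, hence the Hopf differential of $u$ vanishes). Harmonicity of $u$ follows from the Hitchin equation $\bar\partial_A\Phi=0$, and by the Eells--Sampson statement recalled in the previous subsection, $u$ is then maximal.

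The main obstacle is the translation step: one must carefully track how the real structure $\lambda$ and the harmonic metric $h$ interact with the cyclic decomposition, and verify that the off-diagonal piece of $\nabla$ restricted to the real line $I$ really is the arrow labelled $1$ in the cyclic diagram, with image landing in $U$ rather than in $V'$. Once this book-keeping is done, the space-like, conformal, and harmonic properties of $u$ fall out essentially for free from $q_2=0$ and the Hitchin equations.
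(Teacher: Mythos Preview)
Your proposal is correct and follows essentially the same route as the paper: choose $X$ so that the harmonic map is minimal (via Labourie's properness/critical-point argument for Anosov representations), get $q_2=0$ and the cyclic Higgs bundle of Corollary~\ref{cor:maximalhiggsbundles}, extract from the harmonic metric a real negative line $\ell\subset E_\rho$, and read off that the resulting $u:\widetilde X\to\H^{2,n}$ is conformal and harmonic from the shape of $\Phi$.

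The one point you flag as ``the main obstacle'' but do not resolve is exactly the content the paper supplies: the reason the harmonic metric $h$ is block-diagonal in the refined splitting $\mathcal{I}\mathcal{K}\oplus\mathcal{I}\mathcal{K}^{-1}\oplus\mathcal{I}\oplus\mathcal{V}$ (so that $\lambda$ preserves $\mathcal{I}$ and one really gets a real negative line $I\subset E_\rho$) is that the cyclic Higgs bundle with $q_2=0$ is fixed by multiplication of $\Phi$ by fourth roots of unity, and uniqueness of the Hitchin solution then forces $h$ to respect the eigenspace decomposition of this symmetry. Once you invoke this, your ``book-keeping'' step goes through exactly as you describe.
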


\begin{proof}
Recall from Corollary \ref{cor:maximalhiggsbundles} that, given a Riemann surface structure $X\in\T(\S)$ on $\S$, the $G$ Higgs bundle $(\mathcal{E},\Phi)$ associated to $\rho$ has the form
\[(\mathcal{E},\Phi)= \xymatrix{
\mathcal{I}\mathcal{K} \ar@/_/[r]_1 & \mathcal{I}  \ar@/_/[r]_1 \ar@/_/[l]_{q_2} & \mathcal{I}\mathcal{K}^{-1} \ar@/_/[l]_{q_2} \ar@/^/[ld]^{\beta} \\
& \mathcal{V} \ar@/^/[ul]^{\beta^\dagger} }.\]
The quadratic differential $q_2\in H^0(X,\mathcal{K}^2)$ vanishes if and only if the associated equivariant harmonic map $f: \widetilde X \to \frak{X}$ is minimal (recall that $\frak{X}=G/K$ is the symmetric space of $G$).

Maximal representations are Anosov \cite{BILW}, so in particular well-displacing \cite{wienhardanosov}. By a theorem of Labourie \cite{labourieenergy}, there exists a Riemann surface structure $X\in\mathcal{T}(\S)$ such that the associated equivariant harmonic map $f:\widetilde X \to \frak{X}$ is a minimal immersion. In particular, the corresponding $G$ Higgs bundle looks like
\[(\mathcal{E},\Phi)= \xymatrix{
\mathcal{I}\mathcal{K} \ar@/_/[r]_1 & \mathcal{I}  \ar@/_/[r]_1  & \mathcal{I}\mathcal{K}^{-1}  \ar@/^/[ld]^{\beta} \\
& \mathcal{V} \ar@/^/[ul]^{\beta^\dagger} }.\]
Such a $G$ Higgs bundle corresponds to very special point in the moduli space $\mathcal{M}(X,G)$. Indeed, it is fixed by the action of the fourth root of unity by multiplication of the Higgs field $\Phi$. As a consequence (see \cite{KatzMiddleInvCyclicHiggs,collierthesis}), the splitting $\mathcal{E}=\mathcal{I}\mathcal{K} \oplus \mathcal{I}\mathcal{K}^{-1}\oplus \mathcal{I}\oplus \mathcal{V}$ is orthogonal with respect to the Hermitian metric $h$ solving the Hitchin equations.

In particular, the anti-linear involution $\lambda : \mathcal{E}\to\mathcal{E}$ such that $h(x,y)=q(x,\lambda y)$ preserves $\mathcal{I}\mathcal{K} \oplus \mathcal{I}\mathcal{K}^{-1},~\mathcal{I}$ and $\mathcal{V}$. The bundle $E_\rho=\text{Fix}(\lambda)$ thus splits orthogonally as
$$E_\rho=U\oplus \ell \oplus V,$$
where $U$ is a positive definite rank $2$ sub-bundle, $\ell$ is a negative-definite line sub-bundle and $V=(U\oplus\ell)^\bot$.

The line sub-bundle $\ell\subset E_\rho$ corresponds to a $\rho$-equivariant map $u: \widetilde X \to \H^{2,n}$. Using the explicit decomposition of the $G$ Higgs bundle $(\mathcal{E},\Phi)$, one can show that $u$ is harmonic and conformal, hence a maximal immersion (see \cite[Section 3.3]{CTT} for more details).
\end{proof}

\begin{rem}\label{r:fromminimaltomaximal}
The previous proof gives a way to associate a $\rho$-equivariant maximal surface $u: \widetilde X\to \H^{2,n}$ to a $\rho$-equivariant minimal surface $f: \widetilde X \to \frak{X}$. Moreover, the induced metric $u^*g_{\H^{2,n}}$ is conformal to $f^*g_{\frak{X}}$. In Section \ref{s:gaussmap}, we use the Gauss map to reconstruct the minimal surface $f: \widetilde X \to \frak{X}$ from the maximal one.
\end{rem}

\subsection{Uniqueness}

Here, we sketch the proof of the uniqueness part in Main Theorem:

\begin{prop}
If $\rho:\Gamma \to G$ is a maximal representation, then there exists at most one $\rho$-equivariant maximal space-like surface $u:\widetilde\S\longrightarrow \H^{2,n}$.
\end{prop}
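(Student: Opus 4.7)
The plan is to argue by contradiction, implementing the causal distance / maximum principle strategy outlined in the introduction and generalizing \cite{BonsanteSchlenker}. Suppose there were two distinct $\rho$-equivariant maximal space-like embeddings $u_1, u_2: \widetilde\S \to \H^{2,n}$. Pick coherent $\rho$-equivariant lifts $\tilde u_1, \tilde u_2: \widetilde\S \to \widehat\H^{2,n}\subset \R^{2,n+1}$ of $u_1, u_2$ and consider the bilinear evaluation
\[ \Phi(x,y) := \langle \tilde u_1(x),\tilde u_2(y)\rangle. \]
Because $\rho$ acts by isometries of $\q$, $\Phi$ is invariant under the diagonal action of $\Gamma$ on $\widetilde\S\times\widetilde\S$ and descends to a continuous function on the compact quotient $\S\times\S$, where it attains its extrema. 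Under the assumption that every pair of points is time-like related, the trichotomy recalled in the previous subsection gives $|\Phi|<1$ everywhere and $\Phi(x,y) = -\cos d_c(\tilde u_1(x),\tilde u_2(y))$, so the maximum $M\in(-1,1)$ of $\Phi$ is realized at a pair achieving the supremum of causal distances between the two surfaces.

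As a preliminary step, I would need to rule out space-like and light-like related pairs of points. The natural tool here is that any maximal $\rho$ is Anosov, so both $u_1$ and $u_2$ extend continuously to the same space-like topological circle $\Lambda_\rho \subset \partial\H^{2,n}$; this should force the two complete surfaces into a common time-like ``diamond'' bounded by $\Lambda_\rho$, hence pointwise time-like related, so that $\Phi$ takes values in $(-1,1)$ and in particular $M>-1$ whenever $u_1 \neq u_2$.

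Now comes the core computation at a maximizing pair $(x_0,y_0)$. The vanishing of $d\Phi$ at $(x_0,y_0)$ reads $d\tilde u_1(\cdot)\perp \tilde u_2(y_0)$ and $d\tilde u_2(\cdot)\perp \tilde u_1(x_0)$, which amounts to saying that the (unique short) time-like geodesic in $\H^{2,n}$ from $\tilde u_1(x_0)$ to $\tilde u_2(y_0)$ is orthogonal to the tangent planes of both surfaces at the endpoints. In isothermal coordinates around $x_0$ and $y_0$, I would expand $\Phi$ to second order. Using that $\H^{2,n}$ has constant sectional curvature $-1$ in the pseudo-Riemannian sense, combined with the Gauss equation, the Hessian of $\Phi$ at $(x_0,y_0)$ decomposes into an ambient-curvature term of definite sign plus terms involving the second fundamental forms $II_1,II_2$. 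Crucially, the normal bundle of a space-like surface in $\H^{2,n}$ is a rank-$n$ negative-definite bundle, so each $\langle II_i(\cdot,\cdot),II_i(\cdot,\cdot)\rangle$ carries a \emph{fixed} sign, and the maximality $\mathrm{tr}\,II_i=0$ eliminates the troublesome mean-curvature contributions. Testing the Hessian on a suitable ``antidiagonal'' variation $(v,-v)\in T_{x_0}\widetilde\S\oplus T_{y_0}\widetilde\S$ should then produce a strictly positive quadratic form, contradicting the fact that $(x_0,y_0)$ is a maximum and forcing $M=-1$, i.e. $u_1 \equiv u_2$.

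The main obstacle will be the strict sign in the Hessian computation. The ambient term and the two $II_i$-contributions live on different tangent spaces and \emph{a priori} produce cross-terms of mixed sign; one must choose the variation so that the maximal (trace-free) condition on \emph{both} surfaces kills the unwanted terms, and the negative definiteness of the normal bundle to space-like surfaces cooperates with the negative ambient curvature to give a definite quadratic form. This is precisely the step that generalizes the $n=1$ maximum principle of Bonsante--Schlenker, and also the one most sensitive to the pseudo-Riemannian signature of $\H^{2,n}$. Once the strict inequality is obtained, together with the existence established in the previous subsection it completes the proof of the Main Theorem.
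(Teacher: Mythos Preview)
Your overall strategy coincides with the paper's: define $B(u,v)=\langle u,v\rangle$ on the product of the two lifted surfaces, use cocompactness to find a maximum with $B(u_0,v_0)\in(-1,0)$, and derive a contradiction from the second variation. The first-order analysis you describe (the time-like segment orthogonal to both tangent planes) also matches.

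The gap is in your account of the second-order step. The Hessian of $B$ along geodesics $u(t),v(t)$ in $S_1,S_2$ does \emph{not} produce terms of the form $\langle II_i(\cdot,\cdot),II_i(\cdot,\cdot)\rangle$; using the umbilical decomposition $D=\nabla^{\widehat\H^{2,n}}+\langle\cdot,\cdot\rangle p$ and $\nabla^{\widehat\H^{2,n}}=\nabla^i+II_i$ one gets
\[
\ddot B(u_0,v_0)=2\langle\dot u_0,\dot v_0\rangle+\langle II_1(\dot u_0,\dot u_0),v_0\rangle+\langle u_0,II_2(\dot v_0,\dot v_0)\rangle+2B(u_0,v_0).
\]
So the $II$-contributions pair the second fundamental form with the \emph{other} point, not with itself, and the negative-definiteness of the normal bundle gives no direct sign. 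The paper's mechanism is different: maximality makes the scalar quadratic forms $\beta_1(w)=\langle II_1(w,w),v_0\rangle$ and $\beta_2(w)=\langle u_0,II_2(w,w)\rangle$ \emph{trace-free}, hence with eigenvalues $\pm\lambda_1$ and $\pm\lambda_2$; choosing $\dot u_0$ in the $+\lambda_1$-eigendirection (with $\lambda_1\geq\lambda_2\geq 0$) forces $\beta_1(\dot u_0)+\beta_2(\dot v_0)\geq 0$. The remaining inequality $\langle\dot u_0,\dot v_0\rangle\geq 1$ is a separate linear-algebra fact using that $\mathrm{span}(u_0,v_0)$ has signature $(0,2)$ while the tangent planes are positive-definite; combined with $B(u_0,v_0)>-1$ this gives $\ddot B>0$. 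Your ``antidiagonal variation $(v,-v)$'' does not make sense as written (the two factors live in different tangent spaces) and in any case is not the test the argument needs: one must pick $\dot u_0$ along an eigendirection of $\beta_1$ and then control $\dot v_0$ accordingly.
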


\begin{proof}[Sketch of proof]
Suppose there exist $2$ different maximal surfaces $S_1,S_2 \hookrightarrow \H^{2,n}$ on which $\rho(\Gamma)$ acts co-compactly.

The idea is to define a causal distance between $S_1$ and $S_2$. This distance is achieved by a time-like geodesic segment. In time-like directions, negatively curved pseudo-Riemannian manifolds behave like their positively curved Riemannian counterpart. In particular, one can use a maximum principle to show a contradiction.

More precisely, one starts to show that the lift of a $\rho(\Gamma)$-invariant space-like surface to the double cover $\widehat\H^{2,n}$ has two connected components. By choosing a lift of $S_i$ to $\widehat\H^{2,n}$ (for $i=1,2$), one can define a function
\[\begin{array}{llll}
B: & S_1 \times S_2 & \longrightarrow & \R \\
 & (u,v) & \longmapsto & \langle u,v \rangle
\end{array}.\]
Here, $\langle .,.\rangle$ is the restriction of the scalar product in $\R^{2,n+1}$. The function $\arccos (-B)$ is the causal distance function.

By choosing the lifts of $S_i$ in a coherent way, one can show that $B(u,v)<0$ for any $(u,v)\in S_1\times S_2$. By co-compactness of the action of $\rho(\Gamma)$, there exists a couple $(u_0,v_0)\in S_1\times S_2$ such that $B(u_0,v_0)$ is maximum, and moreover $B(u_0,v_0)>-1$ (that is, the geodesic segment passing through $u_0$ and $v_0$ is time-like).

Given unit vectors $\dot{u}_0\in T_{u_0}S_1$ and $\dot{v}_0\in T_{v_0}S_2$, one can consider geodesic paths $\big( u(t)\big)_{t\in(-\epsilon,\epsilon)}$ and $\big( v(t)\big)_{t\in(-\epsilon,\epsilon)}$ in $S_1$ and $S_2$ such that $u'(0)=\dot{u}_0$ and $v'(0)=\dot{v}_0$. The second derivative $\ddot{B}(u_0,v_0)$ of $B\big(u(t),v(t)\big)$ at $t=0$ is thus given by:
\[\ddot{B}(u_0,v_0) = 2\langle \dot{u}_0,\dot{v}_0\rangle + B(D_{\dot{u}_0}\dot{u}_0,v_0)+B(u_0,D_{\dot{v}_0}\dot{v}_0).\]
Here, $D$ is the usual connection on $\R^{2,n+1}$. Because $\widehat\H^{2,n}\subset \R^{2,n+1}$ is umbilical, the connection $D$ evaluated at a point $p\in \widehat\H^{2,n}$ decomposes as:
\[(D_XY)_p = (\nabla^{\widehat\H^{2,n}}_XY)_p + \langle x,y \rangle p,\]
where $\nabla^{\widehat\H^{2,n}}$ is the Levi-Civita connection of $\H^{2,n}$ and $X,Y$ are vector fields on $\widehat\H^{2,n}$. In the same way, the connection $\nabla^{\widehat\H^{2,n}}$ along $S_i$ decomposes as:
\[ \nabla^{\widehat\H^{2,n}}_XY= \nabla^i_X Y + \textrm{II}_i(X,Y),\]
where $\nabla^i$ and $\textrm{II}_i$ are respectively the Levi-Civita connection and the second fundamental form on $S_i$.

In particular, we get
\[\ddot{B}(u_0,v_0) = 2\langle \dot{u}_0,\dot{v}_0\rangle + \langle\textrm{II}_1(\dot{u}_0,\dot{u}_0),v_0\rangle + \langle u_0,\textrm{II}_2(\dot{v}_0,\dot{v}_0)\rangle + 2 B(u_0,v_0). \]

Because $S_1$ and $S_2$ are maximal, the quadratic forms $\beta_1: w \mapsto \langle\textrm{II}_1(w,w),v_0\rangle$ and $\beta_2: w \mapsto \langle u_0, \textrm{II}_2(w,w)\rangle$ are trace-less. Writing their opposite eigenvalues by $\pm \lambda_1$ and $\pm \lambda_2$ respectively, we can assume $\lambda_1\geq\lambda_2\geq 0$. By choosing $\dot{u}_0$ such that $\beta_1(\dot{u}_0)=\lambda_1$, we get that
\[\langle\textrm{II}_1(\dot{u}_0,\dot{u}_0),v_0\rangle + \langle u_0,\textrm{II}_2(\dot{v}_0,\dot{v}_0)\rangle\geq 0.\]
Now, using the fact that $\text{span}(u_0,v_0)$ has signature $(0,2)$ and that the $S_i$ are space-like, one can show that $\langle \dot{u}_0,\dot{v}_0\rangle \geq 1$, and we get
\[\ddot{B}(u_0,v_0)\geq 2 + 2B(u_0,v_0)> 0. \]
The maximum principle then contradicts the maximality of $B(u_0,v_0)$. 
\end{proof}

\subsection{Gauss map}\label{s:gaussmap}

We explain here how to reconstruct a minimal surface $f: \widetilde X\to \frak{X}$ from a maximal one $u: \widetilde\X \to \H^{2,n}$. 

Given a space-like surface $S\hookrightarrow \H^{2,n}$, the exponential map sends the fiber of the normal bundle $NS$ at a point $p\in S$ onto a totally geodesic time-like projective space $\mathbb{RP}^{0,n}\subset \H^{2,n}$. Such a time-like projective space in the projectivization of a $\R^{0,n+1}$ in $\R^{2,n+1}$. In particular, the set on totally geodesic time-like projective spaces in $\H^{2,n}$ is canonically identified with the symmetric space $\frak{X}=G/K$. It defines the \textit{Gauss map}:
\[\mathcal{G}: S \longrightarrow \frak{X}.\]

By a theorem of Ishihara \cite{ishihara}, the Gauss map of a maximal surface is harmonic. In particular, the Gauss map of the $\rho$-equivariant maximal space-like surface $u: \widetilde X \to \H^{2,n}$ is the $\rho$-equivariant harmonic map $f: \widetilde X \to \frak{X}$ given by Corlette's theorem. By construction, this harmonic map is conformal and so is a minimal surface.

\section{Consequences}

\subsection{Characterization of the maximal components}

For $G=\SO_0(2,n+1)$ with $n\geq 2$, the space $\Rep^{max}(\Gamma,G)$ of maximal representations is not connected. It has
\begin{itemize}
\item $2.2^{2g}$ connected components for $n\geq 3$ (\cite{garcia-prada}).
\item $2.(2^{2g}-1)+4g-3$ connected components for $n=2$ (\cite{gothen}).
\end{itemize}

Our Main Theorem gives a nice interpretation of all these components: given $\rho\in\Rep^{max}(\Gamma,G)$, let $u: \widetilde\S \to \H^{2,n}$ be the unique $\rho$-equivariant space-like maximal surface. The pull-back bundle $u^*T\H^{2,n}$ has the following orthogonal splitting:
\[u^*T\H^{2,n} = T\widetilde\S \oplus N \widetilde \S.\]
Because $u$ is space-like, the normal bundle $N\S:= N\widetilde\S /\rho(\Gamma)$ is a (negative definite) $\text{O}(n)$-bundle over $\S$. By uniqueness of $u$, the topological invariants of $N\S$ give invariants of the representation $\rho$.

Recall the following:
\begin{itemize}
\item For $n\geq 3$, the topology of an $\text{O}(n)$-bundle over $\S$ is classified by its first and second Stiefel-Whitney classes $w_1\in H^1(\S,\mathbb{Z}/2\mathbb{Z})$ and $w_2\in H^2(\S,\mathbb{Z}/2\mathbb{Z})$ (giving $2.2^{2g}$ possibilities).
\item For $n=2$, an $\text{O}(2)$-bundle with zero first Stiefel-Whitney class reduces to an $\SO(2)$ bundle and the topology of an $\SO(2)$-bundle over $\S$ is classified by its degree $d\in \mathbb{Z}$. In our case, one can show that the degree of $N\S$ is between $0$ and $4g-4$ (giving a total of $2(2^{2g}-1)+4g-3$ possibilities). 
\end{itemize}

It turns out that each one of these topology arises as the topology of the normal bundle $N\S$, and these new invariants characterize the connected components of $\Rep^{max}(\Gamma,G)$.
 
\subsection{Labourie's conjecture}

Labourie's conjecture \cite{labourieenergy} was originally stated for Hitchin representations:

\begin{conj}[Labourie]
Given a Hitchin representation $\rho: \Gamma \to G$ into a real split Lie group $G$, there exists a unique $\rho$-equivariant minimal immersion $f: \widetilde\S \to \X$, where $\X=G/K$ is the symmetric space of $G$.
\end{conj}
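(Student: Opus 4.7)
The plan is to mirror the two-step strategy used to prove the Main Theorem. For the existence half, since every Hitchin representation is Anosov \cite{labouriehyperconvex} and hence well-displacing, the Dirichlet energy functional $E: \T(\S) \to \R$ assigning to $X\in\T(\S)$ the energy of the unique $\rho$-equivariant harmonic map $\widetilde X \to \X$ is proper, so it admits a critical point $X_0$. At $X_0$ the Hopf differential of the corresponding harmonic map $f:\widetilde X_0 \to \X$ vanishes, which by Eells--Sampson forces $f$ to be a (branched) minimal immersion; Zariski density of the holonomy of a Hitchin representation rules out branch points. This reproduces Labourie's own existence argument \cite{labourieenergy}.

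For uniqueness, the goal is to transport the pseudo-hyperbolic maximum principle underlying Section \ref{s:existence} (and the uniqueness proof that follows it) to the Hitchin setting. Concretely, I would try to associate to $\rho$ a $\G$-homogeneous pseudo-Riemannian space $Y_\G$ playing the role of $\H^{2,n}$: an ambient manifold onto which every $\rho$-equivariant minimal surface in $\X$ lifts canonically as a space-like maximal immersion $u:\widetilde\S \to Y_\G$, and whose sectional curvatures are negative along time-like planes. The construction of $u$ in the rank-$2$ case exploited the cyclic reduction forced by Corollary \ref{cor:maximalhiggsbundles}, producing an orthogonal splitting $E_\rho = U\oplus \ell \oplus V$ with $\ell$ a negative-definite line sub-bundle; for Hitchin representations one would hope that the cyclic Higgs bundles on the Hitchin section produce an analogous canonical splitting. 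Once $u$ is available, the uniqueness proof should carry over verbatim: between two putative lifts one defines the causal pairing $B(u,v) = \langle u,v \rangle$, selects a maximum $(u_0, v_0)$, and runs the second-variation computation
\[\ddot B(u_0, v_0) = 2\langle \dot u_0, \dot v_0 \rangle + \langle \mathrm{II}_1(\dot u_0, \dot u_0), v_0 \rangle + \langle u_0, \mathrm{II}_2(\dot v_0, \dot v_0) \rangle + 2 B(u_0, v_0),\]
using maximality (trace-free $\mathrm{II}_i$) together with the negative time-like curvature of $Y_\G$ to obtain $\ddot B > 0$ and a contradiction.

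The main obstacle is precisely the construction of $Y_\G$. For $\G=\SO_0(2,n+1)$ the signature $(2, n+1)$ of the standard representation is exactly what singles out a negative-definite line in $E_\rho$ tracing out a surface in a space of constant curvature $-1$; for a general real split $\G$ the cyclic Higgs bundles on the Hitchin section distinguish different sub-bundles, and the naive $\G$-homogeneous pseudo-Riemannian candidates (for instance quotients by non-maximal compacts, or projectivizations of irreducible representations of $\G$) typically fail to have constant sectional curvature. One would therefore either need a cleverer auxiliary space adapted to the principal $\mathfrak{sl}_2 \hookrightarrow \mathfrak{g}$, or a sharper maximum principle that tolerates non-constant ambient curvature provided the relevant sectional curvatures along the candidate surfaces remain negative. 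Controlling the sign of these sectional curvatures in the precise time-like directions produced by the minimal surface is where I expect the argument to stall in full generality, consistent with Labourie's conjecture remaining open for most split groups.
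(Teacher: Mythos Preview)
The statement you were asked to prove is labeled a \emph{conjecture} in the paper, not a theorem: the paper does not supply a proof of it, and indeed explicitly records that it is known only in rank~$2$ (Schoen for $\SO_0(2,2)$, Loftin and Labourie for $\mathrm{SL}(3,\R)$, Labourie for rank~$2$ split groups). So there is no ``paper's own proof'' to compare your proposal against.

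Your proposal is internally honest about this: the existence half is Labourie's original argument via properness of the energy functional, which is correct and is exactly what the paper cites as known. For uniqueness you sketch the hope of transplanting the $\H^{2,n}$ maximum principle to a hypothetical pseudo-Riemannian $G$-space $Y_G$, and then you yourself identify the genuine obstruction: no such $Y_G$ with the required curvature properties is known for general split $G$, because the cyclic splitting on the Hitchin section does not, outside the $\SO_0(2,n+1)$ coincidence, single out a negative-definite line sub-bundle landing in a constant-curvature quadric. That diagnosis is accurate and is precisely why the conjecture remains open beyond rank~$2$.

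In short, what you have written is not a proof but a (well-informed) discussion of why the paper's method does not obviously extend; that is the correct assessment of the situation, but it should not be presented as a proof proposal for the conjecture.
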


The main motivation was to give a parametrization of the Hitchin component as a complex vector bundle over the Teichm\"uller space. This conjecture was proved by Schoen \cite{schoen} for $\SO_0(2,2)$, by Loftin \cite{loftin} and independently Labourie \cite{labouriecubic} for $\textrm{SL}(3,\R)$ and by Labourie \cite{labouriecyclic} for any Hitchin representation in a rank 2 real split Lie groups.

It is natural to extend the conjecture to maximal representations. Our Main Theorem implies the following:

\begin{prop}\label{p:Labourieconjecture}
Given a maximal representation $\rho: \Gamma \to G$, where $G$ is a Hermitian Lie group of rank $2$, there exists a unique $\rho$-equivariant minimal surface $f: \widetilde\S \to \frak{X}$, where $\frak{X}=G/K$ is the Riemannian symmetric space of $G$.
\end{prop}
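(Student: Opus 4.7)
The plan is to deduce Proposition \ref{p:Labourieconjecture} directly from the Main Theorem combined with the Gauss map construction of Section \ref{s:gaussmap}. By Corollary \ref{cor:maximalrepinrank2} it suffices to treat $G=\SO_0(2,n+1)$ with $n\geq 2$, the remaining case $n=1$ being covered by Remark \ref{r:AdScase}. For existence I would simply invoke the input already used in Section \ref{s:existence}: since $\rho$ is Anosov and hence well-displacing, Labourie's theorem furnishes a Riemann surface structure $X\in\T(\S)$ for which the Corlette harmonic map $f:\widetilde X\to\frak{X}$ is conformal, i.e.\ a minimal immersion. No additional argument is required for this half.

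For uniqueness, let $f_1,f_2:\widetilde\S\to\frak{X}$ be two $\rho$-equivariant minimal immersions, conformal with respect to Riemann surface structures $X_1,X_2$ on $\S$. Since each $f_i$ is harmonic with vanishing Hopf differential, the cyclic Higgs bundle associated to $(\rho,X_i)$ by Corollary \ref{cor:maximalhiggsbundles} satisfies $q_2=0$, and the construction of Section \ref{s:existence} (see Remark \ref{r:fromminimaltomaximal}) produces from it a $\rho$-equivariant maximal space-like immersion $u_i:\widetilde X_i\to\H^{2,n}$. The uniqueness part of the Main Theorem then forces the images of $u_1$ and $u_2$ in $\H^{2,n}$ to coincide, as a single $\rho(\Gamma)$-invariant maximal surface $\Sigma$. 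To close the loop I would apply Section \ref{s:gaussmap}: the Gauss map $\mathcal{G}\circ u_i:\widetilde X_i\to\frak{X}$ is harmonic (Ishihara) and conformal, hence by Corlette's uniqueness it coincides with $f_i$ itself. Since both Gauss maps are taken of the same image $\Sigma$, they agree, giving $f_1=f_2$.

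The main obstacle is the verification that the passage from the minimal surface $f_i$ to the maximal surface $u_i$ via Section \ref{s:existence}, and the inverse passage from $u_i$ back to a minimal surface via the Gauss map, are genuinely inverse operations. Concretely, one must check that under the orthogonal splitting $E_\rho=U\oplus\ell\oplus V$ provided by the cyclic Higgs bundle, the rank-$2$ subbundle $U$ corresponds, via the identification of $\frak{X}$ with the space of totally geodesic time-like $\mathbb{RP}^{0,n}$'s in $\H^{2,n}$, to the exponential of the normal bundle to $\ell$ along $u_i$. This bookkeeping between the Higgs bundle picture and the pseudo-hyperbolic geometry is the technical core of the argument and is carried out in detail in \cite{CTT}.
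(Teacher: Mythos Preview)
Your argument is basically sound but takes a longer route than the paper and leaves a small wrinkle at the end. The paper does \emph{not} use the Gauss map for uniqueness. It invokes only Remark~\ref{r:fromminimaltomaximal}: the induced metric $u_i^*g_{\H^{2,n}}$ is conformal to $f_i^*g_{\frak{X}}$. Since the Main Theorem forces the two maximal surfaces to coincide, the conformal classes determined by $f_1^*g_{\frak{X}}$ and $f_2^*g_{\frak{X}}$ agree; both $f_1,f_2$ are then $\rho$-equivariant harmonic maps from the \emph{same} Riemann surface, and Corlette's uniqueness finishes in one line. No verification that the Higgs-bundle construction and the Gauss map are mutually inverse is needed.

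Your Gauss-map route also works, but the last step needs care: knowing that the images $u_1(\widetilde\Sigma)=u_2(\widetilde\Sigma)=\Sigma$ coincide does not by itself yield $\mathcal{G}\circ u_1=\mathcal{G}\circ u_2$ as maps, only that they have the same image $\mathcal{G}(\Sigma)$. To close the argument you must still observe that the common maximal surface $\Sigma$ determines a single conformal class on $\widetilde\Sigma$ (the two conformal parametrizations $u_1,u_2$ differ by a $\Gamma$-equivariant, hence isotopically trivial, reparametrization), so $X_1=X_2$, and then Corlette applies directly to the $f_i$. But that is exactly the paper's argument, and at that point the Gauss-map detour and the ``inverse operations'' bookkeeping you flag as the main obstacle become unnecessary.
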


\begin{proof}
By Corollary \ref{cor:maximalrepinrank2}, it suffices to restrict ourselves to the case $G=\SO_0(2,n+1)$.

As explained in Section \ref{s:existence}, the existence was known. For the uniqueness, note that our construction associate a $\rho$-equivariant space-like maximal surface $u: \widetilde\S \to \H^{2,n}$ to any $\rho$-equivariant minimal surface $f: \widetilde\S \to \frak{X}$. Recall also (see Remark \ref{r:fromminimaltomaximal}) that the induced metric $u^*g_{\H^{2,n}}$ is conformal to $f^*g_{\frak{X}}$. By the uniqueness part of the Main Theorem, if there exist two $\rho$-equivariant minimal surface $f_1,f_2: \widetilde\S \to \frak{X}$, the induced metrics $f^*_ig_{\frak{X}}$ are conformal. However, minimal immersion are harmonic, so the uniqueness part of Corlette's theorem implies that $f_1=f_2$.
\end{proof}

\subsection{Complex structure on $\Rep^{max}(\Gamma,G)$}

A main motivation to Labourie's conjecture for Hitchin representations is to get a Mapping Class Group invariant parametrization of the Hitchin component by a holomorphic vector bundle over the Teichm\"uller space. In particular, if the conjecture is true, one would get a natural complex structure on the Hitchin component.

For the case of maximal representations, using Higgs bundles theory, one could parametrize the components of maximal representations as a bundle over the Teichm\"uller space. In general, the fiber of this bundle would be singular complex manifolds.

In a recent work, D. Alessandrini and B. Collier \cite{PSp4maximalRepsAC} adapted a construction of Simpson \cite{simpsonI,simpsonII} to obtain the following:

\begin{prop}[Alessandrini-Collier]
The space of maximal representations $\Rep^{max}(\Gamma,G)$ of $\Gamma$ into a Hermitian Lie group of rank $2$ admits a Mapping Class Group invariant parametrization by a bundle $\mathcal{B} \to \T(\S)$. Moreover, $\mathcal{B}$ carries a natural complex structure.
\end{prop}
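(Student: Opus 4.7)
The plan is to use the uniqueness in the Main Theorem to promote the natural projection to $\T(\S)$ to a map with complex analytic fibers, and then appeal to Simpson's construction of a universal moduli space of Higgs bundles to glue the fibers together holomorphically.

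First, I would define the forgetful map $\pi: \Rep^{max}(\Gamma,G) \to \T(\S)$ by sending a class $[\rho]$ to the isotopy class of the complex structure induced by the unique $\rho$-equivariant minimal surface $f: \widetilde\S \to \X$ in the Riemannian symmetric space, whose existence and uniqueness is Proposition \ref{p:Labourieconjecture}. Mapping Class Group equivariance of $\pi$ is automatic from this canonicity: an element $\phi \in \text{MCG}(\S)$ sends $[\rho]$ to $[\rho \circ \phi_*]$, and the unique minimal surface for the new representation is obtained by precomposing $f$ with a lift of $\phi$, so the conformal structure is transformed by the natural MCG action on $\T(\S)$.

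Second, I would identify the fiber $\pi^{-1}(X)$ inside the moduli space $\mathcal{M}(X,G)$. By the non-Abelian Hodge correspondence and Corollary \ref{cor:maximalhiggsbundles}, the Hopf differential of $f$ is, up to a factor, the quadratic differential $q_2 = \frac{1}{2}\text{tr}(\Phi^2) \in H^0(X, \mathcal{K}^2)$ of the associated $G$-Higgs bundle. Hence $[\rho] \in \pi^{-1}(X)$ if and only if this Higgs bundle is cyclic, i.e.\ has $q_2 = 0$. Such cyclic maximal Higgs bundles are parametrized by triples $(\mathcal{I}, \mathcal{V}, \beta)$, with $\mathcal{I}$ a square root of $\mathcal{O}$, $\mathcal{V}$ an $\SO(n,\mathbb{C})$-bundle, and $\beta \in H^0(X, \mathcal{I}\mathcal{K}^2\mathcal{V})$; hence the fiber is a closed (possibly singular) complex analytic subspace of $\mathcal{M}(X,G)$.

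Third, following \cite{simpsonI, simpsonII}, I would invoke the universal moduli space $\mathcal{M} \to \T(\S)$ of Higgs bundles attached to the universal curve over Teichm\"uller space; its fiber over $X$ is $\mathcal{M}(X,G)$, and it is a complex analytic space carrying a holomorphic lift of the MCG action. The conditions defining a cyclic maximal $G$-Higgs bundle --- reduction of structure group, maximality of the Toledo invariant, and vanishing of the globally defined section $q_2$ --- are holomorphic conditions preserved in families, so they cut out a closed complex analytic sub-bundle $\mathcal{B} \subset \mathcal{M}$ with holomorphic projection to $\T(\S)$. The Main Theorem, combined with non-Abelian Hodge on each fiber, provides a canonical bijection $\mathcal{B} \longleftrightarrow \Rep^{max}(\Gamma,G)$ intertwining the MCG actions, yielding the desired parametrization. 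The main obstacle lies precisely here: the zero locus $\{q_2 = 0\}$ is only complex analytic, not smooth in general, and for small $n$ strictly polystable cyclic Higgs bundles introduce genuine singularities, so $\mathcal{B}$ cannot be a holomorphic vector bundle but only a holomorphic family of (singular) complex analytic spaces. Producing this family in a MCG-equivariant way, and verifying that the Dolbeault-to-Betti identifications remain coherent as $X$ varies, is precisely the content of the adaptation of Simpson's framework carried out by Alessandrini and Collier.
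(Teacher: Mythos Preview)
Your proposal is correct and follows essentially the same route as the paper: build the universal moduli space of maximal $G$-Higgs bundles over $\T(\S)$ via Simpson's construction, use Proposition~\ref{p:Labourieconjecture} to identify $\Rep^{max}(\Gamma,G)$ with the locus $\mathcal{B}=\{\text{tr}(\Phi^2)=0\}$ fiberwise through non-Abelian Hodge, and observe that this condition is holomorphic so $\mathcal{B}$ inherits a complex structure. Your treatment is in fact slightly more careful than the paper's sketch, since you flag that $\mathcal{B}$ is a priori only a complex analytic subspace (possibly singular at strictly polystable points) rather than a smooth submanifold, and you spell out the MCG-equivariance explicitly.
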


\begin{proof}[Sketch of proof]
Consider the Universal Moduli space of maximal $G$ Higgs bundle
\[\pi: \mathcal{M}^{max}(\mathcal{U},G) \longrightarrow \T(\S),\]
whose fiber $\pi^{-1}(X)$ is the moduli space of $G$ Higgs bundles over $X$ parametrizing $\Rep^{max}(\Gamma,G)$ via the non-Abelian Hodge correspondence.

Recall that a $G$ Higgs bundle $(\mathcal{E},\Phi)$ is associated to an equivariant minimal surface $f: \widetilde X \to \frak{X}$ if and only if $\text{tr}(\Phi^2)=0$ (see Corollary \ref{cor:maximalhiggsbundles}).

Proposition \ref{p:Labourieconjecture} implies that the non-Abelian Hodge correspondence gives a bijection between $\Rep^{max}(\Gamma,G)$ and 
$$\mathcal{B}:=\big\{(\mathcal{E},\Phi,X)\in \mathcal{M}^{max}(\mathcal{U},G),~\text{tr}(\Phi^2)=0 \big\}\subset \mathcal{M}^{max}(\mathcal{U},G).$$
 It thus gives an equivariant parametrization of $\Rep^{max}(\Gamma,G)$.

Using a result of Simpson \cite{simpsonI,simpsonII}, Alessandrini and Collier showed that $\mathcal{M}^{max}(\mathcal{U},G)$ carries a canonical complex structure. Moreover, the sub-bundle $\mathcal{B}\subset \mathcal{M}^{max}(\mathcal{U},G)$ is given by the equation $\text{tr}(\Phi^2)=0$ which is a holomorphic equation. It follows that $\mathcal{B}$ is a holomorphic sub-manifold. 
\end{proof}

Finally, let us just remark that the space $\Rep^{max}(\Gamma,G)$ of maximal representations carries a natural symplectic form called the \textit{Goldman symplectic form}. It would be interesting to understand if this symplectic form is compatible with the complex structure, in order to try to construct a K\"ahler structure on $\Rep^{max}(\Gamma,G)$.,
{\small
\bibliographystyle{alpha}
\bibliography{maximalreps.bbl}}

\end{document}